\documentclass[11pt,reqno]{amsart}

\usepackage{mathtools}

\usepackage{amsmath, amsfonts, amsthm, amssymb,graphicx, color, cite,enumitem}
\usepackage[margin=1.2in]{geometry}

\usepackage{latexsym,hyperref}

\usepackage{hyperref}
\hypersetup{hidelinks}

\newtheorem{Theorem}{Theorem}[section]
\newtheorem{Lemma}{Lemma}[section]

\theoremstyle{definition}

\theoremstyle{remark}
\newtheorem{Remark}{Remark}[section]

\numberwithin{equation}{section}
\allowdisplaybreaks

\def\im{\mathrm{i}\,}

\def\e{\varepsilon}
\def\epsilon{\varepsilon}
\def\de{{\partial}}

\def\drm{{\mathrm{d}}}

\def\uno{{\mathrm{Id}}}

\def\RR {\mathbb{R}}
\def\CC {\mathbb{C}}
\def\TT {\mathbb{T}}
\def\ZZ {\mathbb{Z}}
\def\NN {\mathbb{N}}

\def\Re{{\rm Re}}

\newcommand{\cA}{\mathcal{A}}
\newcommand{\cB}{\mathcal{B}}

\newcommand{\cD}{\mathcal{D}}

\newcommand{\cL}{\mathcal{L}}
\newcommand{\cM}{\mathcal{M}}

\newcommand{\cO}{\mathcal{O}}

\newcommand{\cR}{\mathcal{R}}

\newcommand{\cT}{\mathcal{T}}
\newcommand{\cU}{\mathcal{U}}
\newcommand{\cV}{\mathcal{V}}

\author[M. Chen]{Robin Ming Chen}
\address{Department of Mathematics, University of Pittsburgh, Pittsburgh, PA 15260, USA.}
\email{mingchen@pitt.edu}

\author[T. Dai]{Tian Dai}
\address{Department of Mathematics, University of Pittsburgh, Pittsburgh, PA 15260, USA.}
\email{TID34@pitt.edu}

\author[D. Wang]{Dehua Wang}
\address{Department of Mathematics, University of Pittsburgh, Pittsburgh, PA 15260, USA.}
\email{dhwang@pitt.edu}

\author[W. Wang]{Weiqiang Wang}
\address{Department of Mathematics, University of Pittsburgh, Pittsburgh, PA 15260, USA.}
\email{wew179@pitt.edu}

\title[Long-wave shear instability on the $\beta$-Plane]
{Long-wave Stability and Instability of Periodic Shear Flows for the 2D Navier-Stokes Equations on the $\beta$-Plane}

\keywords{Long-wave instability, beta-plane, incompressible Navier-Stokes equations, shear flows, asymptotic expansion}
\subjclass[2020]{76E05, 76E09}
\date{\today}

\begin{document}
\begin{abstract}
We study the spectral stability of periodic shear flows for the two-dimensional Navier--Stokes equations on the $\beta$-plane in the long-wave regime. It is known that non-rotating periodic shear flows are generically unstable to sufficiently long-wave perturbations. We show that planetary rotation can suppress this instability: using a perturbative analysis based on Kato's reduction, we derive an asymptotic expansion for the principal eigenvalue of the linearized operator and obtain an explicit stability criterion in terms of the shear profile, the viscosity, and the Coriolis parameter. Under the critical scaling where the Coriolis effect and the long-wave perturbation are of comparable size, this criterion extends Yudovich's classical long-wave instability threshold to rotating flows and reveals a sharp transition between stability and instability governed by the ratio $K$. These results give a rigorous account of how viscosity, shear, and rotation compete to determine long-wave stability on the $\beta$-plane.
\end{abstract}

\maketitle

\section{Introduction}
In this paper, we investigate the stability and instability of shear flows for the two-dimensional Navier--Stokes equations on the $\beta$-plane with periodic boundary conditions. The two-dimensional Navier--Stokes equations describe the evolution of incompressible viscous fluids and have been extensively studied from both physical and mathematical perspectives. In geophysical fluid dynamics, planetary rotation plays a fundamental role in determining large-scale oceanic and atmospheric motions. A classical approximation for planetary  rotation flows is the $\beta$-plane model, in which the Coriolis parameter varies linearly with latitude. This approximation replaces the spherical geometry of the Earth by a flat plane while retaining the meridional variation of the Coriolis force in the following form, which is essential for describing large-scale phenomena such as Rossby waves:
\begin{equation}\label{Eq-NS-beta}
\begin{cases}
\partial_{t} w + \mathbf{v} \cdot \nabla w + \beta v_{2}
= \nu \Delta w + f,
\quad (\tilde{x},y) \in \mathbb{T}_{\alpha} \times \mathbb{T}, \\[4pt]
\mathbf{v} = (v_1,v_2) = (\phi_{y},-\phi_{\tilde{x}}),
\quad w = -\Delta \phi,
\end{cases}
\end{equation}
where $\mathbf{v}$ denotes the velocity field, $w$ the vorticity, $f$ the external forcing term, $\nu>0$ the kinematic viscosity, and $\beta>0$ the Coriolis parameter. The domain is the rectangular torus
\[
\mathbb{T}_{\alpha} \times \mathbb{T} := \mathbb{R}/\left(\tfrac{2\pi}{\alpha}\mathbb{Z} \right) \times \mathbb{R}/(2\pi\mathbb{Z})
\simeq \left[0,\tfrac{2\pi}{\alpha} \right) \times [0, 2\pi),
\]
where $0 < \alpha<1$ represents the inverse aspect ratio. We use the notation
\[
\nabla = (\partial_{\tilde{x}},\partial_y),
\qquad
\Delta = \partial_{\tilde{x}\tilde{x}} + \partial_{yy}.
\]
Our goal is to investigate the stability properties of shear flows of the form
\[
\mathbf{v}_E = (U(y),0),
\qquad
w_E = -U'(y),
\]
where $U$ is sufficiently smooth and satisfies
\[
\int_{\mathbb{T}} U(y)\,dy = 0.
\]
These profiles are steady states of \eqref{Eq-NS-beta} provided that the external forcing $f$ is chosen as
\[
f(\tilde{x},y) = \nu U'''(y).
\]

\subsection{Historical research and comparison with previous literature}

We first review several developments that motivate the present work.

\subsection*{The stability and instability of shear flows} The stability theory of shear flows has a long history dating back to the classical works of Helmholtz, Kelvin, Rayleigh, Orr, Sommerfeld, and Tollmien. In the modern PDE framework, substantial progress has been achieved during the past two decades. In particular, the works of Bedrossian and Masmoudi \cite{Bedrossian2015,Bedrossian2016} established the mechanisms of enhanced dissipation and inviscid damping, leading to a quantitative nonlinear stability theory near Couette flow.

For the Navier-Stokes equations, the stability properties of shear flows depend strongly on the underlying boundary conditions. In domains with physical boundaries and no-slip conditions, boundary layers may generate strong instability mechanisms \cite{Bian,Grenier16,Grenier16Adv}. In contrast, within the periodic framework considered here, Kolmogorov \cite{Arnold} initiated the study of perturbations around the shear flow $U(y)=\sin(y)$ for different aspect ratios $\alpha$ and viscosities $\nu$. This flow, now known as the Kolmogorov flow, was first analyzed by Meshalkin and Sinai \cite{Mehsalkin61Investigation}, who used continued-fraction methods to prove spectral stability when $\alpha>1$ and instability for sufficiently small $\alpha$, depending on the viscosity parameter.

In 1966, Yudovich \cite{Yudovich} identified a long-wave instability mechanism for two-dimensional Navier--Stokes shear flows. He derived the formal criterion that long-wave perturbations become unstable precisely when
\begin{equation}\label{Yudovich}
  \|\partial^{-1}_{y} U\|_{L^2} > \nu.
\end{equation}
Although the original argument was formal and lacked a complete rigorous justification, it provided an important prediction for the instability threshold.

Motivated by Yudovich's observation, Colombo, Dolce, Montalto, and Ventura \cite{Colombo} recently established the first rigorous proof of long-wave instability for general periodic shear flows in elongated tori $\mathbb{T}_{\alpha}\times\mathbb{T}$ with $\alpha\ll1$. Their result confirmed Yudovich's criterion and extended the Meshalkin--Sinai instability mechanism beyond the Kolmogorov flow. The authors employed two complementary approaches, based on Kato's perturbation theory and a normal-form reduction, to overcome the singular perturbation arising in the linearized operator as $\alpha\rightarrow0$. Their analysis demonstrated that the instability is genuinely associated with long-wave perturbations and is fundamentally different from boundary-layer instabilities occurring in no-slip domains. These results are stated for the classical, non-rotating two-dimensional Navier--Stokes equations; we discuss their relation to rotating flows below.

\subsection*{The effect of rotation}  The stability and instability of fluid flows under rotational effects have been studied extensively in both geophysical and astrophysical contexts. This subject encompasses a wide range of phenomena, including Taylor--Couette instability, centrifugal stability criteria, and Rossby waves generated by planetary rotation. A comprehensive discussion of this broad literature is beyond the scope of this paper. We focus on works most closely related to the present setting.

A classical model incorporating rotational effects is the Navier--Stokes--Coriolis system \cite{Chandrasekhar}:
\begin{equation}\label{NSC}
\begin{cases}
  \partial_t \mathbf{v} + (\mathbf{v} \cdot \nabla) \mathbf{v}  + \nabla p  + \boldsymbol{\Omega} \times \mathbf{v} = \nu \Delta \mathbf{v} + \mathbf{f}, \\
  \nabla \cdot \mathbf{v} = 0,
\end{cases}
\end{equation}
where $p$ is the pressure, $\boldsymbol{\Omega}$ denotes the angular velocity vector, and $\mathbf{f}$ represents the external forcing. The Coriolis term $\boldsymbol{\Omega}\times\mathbf{v}$ introduces geostrophic balance and can either stabilize or destabilize the flow depending on the parameters.

It is worth pointing out that, in two dimensions, a \emph{constant} background rotation $\boldsymbol{\Omega}=(0,0,\beta)$ leaves the vorticity equation unchanged: since $\mathbf v$ is divergence-free, taking the curl of the momentum equation in \eqref{NSC} gives $\nabla\times(\boldsymbol{\Omega}\times\mathbf v)=\beta(\nabla\cdot\mathbf v)\hat{\mathbf z}=0$, so the resulting vorticity equation coincides exactly with the one for $\beta=0$. Consequently, the long-wave instability results of \cite{Colombo}, although formulated for the non-rotating equations, already apply verbatim to the two-dimensional Navier--Stokes--Coriolis system with constant Coriolis parameter. 

In this paper, we consider the $\beta$-plane equation \eqref{Eq-NS-beta},
which can be viewed as a two-dimensional special case of the Navier--Stokes--Coriolis system \eqref{NSC} written in vorticity form, with $\mathbf{v}=(v_1,v_2,0)$ and $\boldsymbol{\Omega}=(0,0,\beta y)$.
The linear variation of the Coriolis parameter with $y$ embodies the beta effect, reflecting planetary rotation and marking a fundamental departure from classical non-rotating vorticity dynamics: the spatial dependence of $\boldsymbol{\Omega}$ generates an additional inhomogeneity in the meridional direction, and it is this case that falls outside the scope of \cite{Colombo} and that we address in the present work.

For rotating shear flows, particularly the Couette flows, substantial progress has been made following the pioneering work of \cite{Bedrossian2017}. A number of recent studies \cite{Fan,HuangW202409,HuangW202412,Li,Zelati} investigated transition thresholds near the Couette flows for the Navier--Stokes--Coriolis system in $\mathbb{T}\times\mathbb{R}\times\mathbb{T}$ with constant rotation $\boldsymbol{\Omega}=(0,0,\beta)$ and no external forcing $\mathbf{f}=0$ in \eqref{NSC}.

In general, the Coriolis force may stabilize the flow, although its effect depends sensitively on the rotation parameter $\beta$. A central quantity in this analysis is the Bradshaw--Richardson number
\[
B_{\beta} := \beta(\beta-1),
\]
introduced to characterize the stable and unstable regimes of the rotating system; see, for example, \cite{Bradshaw,HuangY}. For the Navier--Stokes--Coriolis system near the Couette flow at high Reynolds number $\mathrm{Re}$, the transition threshold is measured by the exponent $\gamma$ for which initial perturbations of size $\|u_{\mathrm{in}}\|_{H^\sigma}\lesssim \mathrm{Re}^{-\gamma}$ remain stable. Without rotation, the sharp threshold $\gamma\leq3/2$ was established by Bedrossian, Germain, and Masmoudi \cite{Bedrossian2017}. In \cite{HuangW202409,HuangW202412}, Huang, Sun, and Xu showed that a dispersive structure in the zero-frequency modes suppresses the lift-up effect, thereby improving the threshold to $\gamma\leq1$ for $B_\beta>0$ ($|\beta|\geq2$) and to $\gamma\leq2$ at the degenerate point $\beta=1$ (where $B_\beta=0$). Coti Zelati, Del Zotto, and Widmayer \cite{Zelati} gave a more complete picture of the linearized dynamics, identifying three distinct regimes: lift-up stability when $B_\beta=0$, exponential instability when $B_\beta<0$, and dispersive behavior when $B_\beta>0$. Building on this, they refined the threshold further, to $\gamma\leq8/9$ for $B_\beta>0$ and to $\gamma\leq5/6$ for $B_\beta\gtrsim\nu^{-1}$, the latter under the additional assumption that the initial double zero modes vanish. More recently, Li, Sun, Wang, Wei, and Zhang \cite{Li} obtained improved thresholds without this restriction on the double zero modes. Together, these works show that rotation can substantially enhance nonlinear stability, and that the improvement becomes more pronounced as the analysis of the underlying dispersive mechanism is refined.

On the other hand, Fan, Han, and Wang \cite{Fan} considered the regime $\beta \in \big(\frac{2}{17}(5-2\sqrt2),\frac{2}{17}(5+2\sqrt2)\big) \subset(0,1)$
in three dimensions and showed nonlinear instability in the sense of Hadamard. Their result indicates that the stabilizing effects associated with the inviscid damping and enhanced dissipation may fail to overcome the linear instability generated by the Coriolis force in certain regimes.

The inviscid stability theory for the $\beta$-plane equation has a long history originating from the study of barotropic instability in geophysical flows. Kuo \cite{Kuo} first derived the fundamental stability equation for zonal flows on the $\beta$-plane, leading to the well-known Rayleigh--Kuo criterion, which extends Rayleigh's inflection point criterion by incorporating the variation of planetary vorticity. This framework was further developed through the works of Howard and Pedlosky \cite{Pedlosky1963,Pedlosky1964}, who established general constraints on unstable eigenvalues and wave speeds, including semicircle-type bounds and necessary conditions for instability. These results provided a theoretical foundation for understanding the interaction between background shear and Rossby wave propagation.

Subsequent studies investigated the detailed spectral structure of the Rayleigh--Kuo equation and the role of Rossby waves in shear-induced instabilities. In particular, Drazin, Beaumont, and Coaker \cite{Drazin2006} analyzed Rossby waves modified by basic shear and clarified the relationship between neutral modes and barotropic instability on the $\beta$-plane. Numerical investigations of unstable modes for zonal and non-zonal channels were also carried out by Kobayashi and Sakai \cite{KobayashiSakai}, revealing the dependence of instability properties on the geometry of the channel and the underlying flow structure.

More recently, significant progress has been made in understanding the long-time dynamics of perturbations around stable shear flows on the $\beta$-plane. Lin, Yang, and Zhu \cite{Lin} developed a systematic approach for determining sharp stability conditions for barotropic instability on the $\beta$-plane. In particular, their detailed analysis of the shear flow
\[
U(y)=\frac{1+\cos(\pi y)}{2}
\]
revealed a delicate interaction between the wave number and the Rossby parameter, showing that the dynamics of rotating shear flows are substantially richer than in the non-rotating setting. In \cite{WeiZhangZhu}, Wei, Zhang, and Zhu established linear inviscid damping results for the linearized $\beta$-plane equation around shear flows. Their analysis highlighted the additional difficulties caused by the Coriolis term, which introduces more singular structures into the Rayleigh--Kuo equation compared with the classical Euler case. In \cite{WangZhangZhu}, Wang, Zhang, and Zhu investigated the dynamics near the planar Couette flow for the inviscid $\beta$-plane equation. They studied both the existence of nearby stationary and traveling structures in Sobolev spaces and the long-time asymptotic behavior of perturbations in Gevrey spaces. In particular, by analyzing the singular Rayleigh--Kuo operators associated with the linearized equation, they identified a sharp region in the $(\alpha,\beta)$-parameter space where non-shear traveling waves near Couette flow cannot exist, while such traveling structures emerge in the complementary regime. These developments complement the spectral stability analysis in \cite{Lin} and demonstrate the rich interplay between instability, neutral modes, and dispersive mechanisms in rotating shear flows.

\subsection*{Comparison of our results with the previous literature} Despite the extensive literature on rotating flows with constant Coriolis parameters, mathematically rigorous results for the viscous $\beta$-plane flows remain relatively limited. A notable contribution is due to Al-Jaboori and Wirosoetisno \cite{Al-Jaboori}, who proved that solutions with sufficiently regular forcing become asymptotically zonal and that the global attractor reduces to a single point.

To our knowledge, no previous work has established a spectral stability theory for periodic shear flows of the viscous $\beta$-plane equations. Compared with the constant-rotation case, the $\beta$-term breaks the symmetry in the $y$-direction and leads to a more intricate interaction between shear, viscosity, and rotation. In particular, the long-wave spectral structure becomes significantly more delicate. In this paper, we carry out the analysis of this problem by studying the long-wave regime.

\subsection{Main results and strategy}

We linearize \eqref{Eq-NS-beta} around a shear flow and consider perturbations of the form $\omega(t,\alpha\tilde{x},y)$. After introducing the rescaled variable $x=\alpha\tilde{x}$, the linearized equation becomes
\begin{equation}
\begin{cases}
\partial_t \omega
+ \alpha U(y)\partial_x \omega
- \alpha (U''(y)-\beta)\partial_x \psi
= \nu \Delta_\alpha \omega,
\quad (x,y)\in \mathbb{T}^2, \\[4pt]
\mathbf{u} = (\psi_y,-\psi_x),
\qquad
\omega = -\Delta_\alpha \psi,
\end{cases}
\end{equation}
where $\Delta_\alpha := \partial_{yy}+\alpha^2\partial_{xx}$. The corresponding linear operator is
\begin{equation}
\boldsymbol{\mathcal L}_{\nu,\alpha,\beta}
:= \nu \Delta_\alpha
- \alpha \partial_x \Big(
U(y)
- (U''(y)-\beta)\Delta_\alpha^{-1}
\Big).
\end{equation}

Our analysis is motivated by the works \cite{Colombo,Yudovich}, where long-wave spectral instability was established for non-rotating shear flows ($\beta=0$). In particular, under the condition \eqref{Yudovich} and for sufficiently long waves satisfying
\[
\alpha |k| < \delta_0 \nu,
\]
the linearized operator $\boldsymbol{\mathcal L}_{\nu,\alpha,\beta}$ possesses an eigenvalue with positive real part.

The main observation of the present work is that the Coriolis effect fundamentally modifies this instability mechanism. In the long-wave regime, the $\beta$-term introduces a stabilizing contribution, and the competition between shear-induced instability, viscous damping, and rotation determines the spectral behavior.

Since $\boldsymbol{\mathcal L}_{\nu,\alpha,\beta}$ preserves the Fourier modes in the $x$-direction, we take the Fourier transform in $x$ and reduce the problem to the one-dimensional operator
\begin{equation}
\mathcal L_{\nu,\alpha k,\beta}
:= \nu(\partial_{yy}-\alpha^2k^2)
- \im \alpha k\Big(
U(y)
- (U''(y)-\beta)
(\partial_{yy}-\alpha^2k^2)^{-1}
\Big),
\end{equation}
acting from $H^2(\mathbb T)$ to $L^2(\mathbb T)$. To investigate the long-wave regime, we introduce
\[
\varepsilon := \alpha |k|,
\]
and without loss of generality we assume $k>0$.

We analyze the spectrum of $\mathcal L_{\nu,\varepsilon,\beta}$ using perturbation theory in $\varepsilon$. Since $(\partial_{yy}-\varepsilon^2)^{-1}$ becomes singular at the zero Fourier mode as $\varepsilon\to0$, we isolate the zero mode and decompose
\begin{equation*}
\mathcal L_{\nu,\varepsilon,\beta} = \mathcal M_{\nu,\varepsilon,\beta} - \im \varepsilon \mathcal R_{\varepsilon,\beta},
\end{equation*}
where
\begin{equation}\label{Operator-M}
\mathcal M_{\nu,\varepsilon,\beta} := \nu \mathcal D_\varepsilon - \frac{\im}{\varepsilon}(U''(y)-\beta)\Pi_0,
\end{equation}
\begin{equation}\label{Operator-R}
\mathcal R_{\varepsilon,\beta} := U(y) + (U''(y)-\beta)(-\partial_{yy}+\varepsilon^2)^{-1}\Pi_{\neq}.
\end{equation}
Here $\mathcal D_\varepsilon := \partial_{yy}-\varepsilon^2$, $\Pi_0$ and $\Pi_{\neq}$ denote the projections onto the zero and nonzero Fourier modes in $y$, respectively. We regard $\mathcal M_{\nu,\varepsilon,\beta}$ as the leading-order operator and $\mathcal R_{\varepsilon,\beta}$ as a perturbation.

Our main result is the following.
\begin{Theorem}\label{thm:main-1}
Let $U\in C^2(\mathbb T)$ be a mean-zero shear profile. Then there exist constants $\delta_{U} >0$, depending only on $\|U\|_{C^2}$, such that if
\begin{equation}\label{keyassumption}
\frac{\e}{\nu}(1+\beta) < \delta_{U} \quad \text{and} \quad \e < \frac{\beta}{\nu},
\end{equation}
then $\cL_{\nu,\e,\beta}$ has a simple eigenvalue $\lambda^{(0)}_{\nu,\e,\beta}$ satisfying
\begin{equation}\label{eigenvalue-beta=nueps}
  \Re \, \lambda_{\nu,\e,\beta}^{(0)} = \frac{\e^2}{\nu}\left[ -\nu^2  - \left \langle U, \left( \partial_{y}^4 + \frac{\beta^2}{\e^2 \nu^2} \right)^{-1} U'' \right\rangle + \cO \left( \frac{\e}{\nu}(1+\beta)^3 \right) \right].
\end{equation}
\end{Theorem}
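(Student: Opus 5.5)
The plan is to treat $\cL_{\nu,\e,\beta}=\cM_{\nu,\e,\beta}-\im\e\cR_{\e,\beta}$ as a perturbation of $\cM_{\nu,\e,\beta}$ and use Kato's reduction onto the zero mode.

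\textbf{Spectrum of the leading part.} Write $\cM_{\nu,\e,\beta}$ in the decomposition $L^2=\mathrm{span}\{1\}\oplus \Pi_{\neq}L^2$. Since $\Pi_0 U''=0$, the term $-\frac{\im}{\e}(U''-\beta)\Pi_0$ maps the constants to $\frac{\im\beta}{\e}$ (zero mode) plus $-\frac{\im}{\e}U''$ (nonzero modes). Thus $\cM$ is block upper-triangular in the reversed order:
\begin{itemize}
\item on $\Pi_{\neq}$ it acts as $\nu\cD_\e$, with spectrum in $\{\Re\lambda\le-\nu\}$;
\item on constants it has diagonal entry $-\nu\e^2+\frac{\im\beta}{\e}$.
\end{itemize}
Hence $\cM$ has a simple eigenvalue $\mu_0=-\nu\e^2+\im\beta/\e$. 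Its eigenfunction is
\[
f_0=1+g,\qquad g=\tfrac{\im}{\e}\big(\nu\cD_\e-\mu_0\big)^{-1}U'' .
\]
The adjoint eigenfunction is simply the constant $1$, because $\Pi_0\cM=\mu_0\Pi_0$ on the range of $\Pi_0$.

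\textbf{Spectral gap and reduction.} The gap between $\mu_0$ and the rest of the spectrum has size at least of order $\nu$; the assumption $\e<\beta/\nu$ is what keeps the large imaginary part under control. I construct the Riesz projection $P_\e$ of $\cL$ on a contour around $\mu_0$. This requires bounding $\e\cR(\cM-z)^{-1}$ on the contour. The delicate point is that $(\cM-z)^{-1}$ contains the off-diagonal piece $\frac1\e(\nu\cD_\e-z)^{-1}U''$. Its size is of order $\frac{1}{\e}\big|\nu\cD_\e-\im\beta/\e\big|^{-1}$, roughly $\min(1/(\e\nu),1/\beta)$. Multiplied by $\e$ and by $\|\cR\|\lesssim\|U\|_{C^2}(1+\beta)$, this is small precisely when $\frac{\e}{\nu}(1+\beta)<\delta_U$. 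A Neumann series then gives a simple isolated eigenvalue $\lambda^{(0)}$ near $\mu_0$.

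\textbf{Expansion of the eigenvalue.} I expand $\lambda^{(0)}=\mu_0+\lambda_1+\lambda_2+\dots$ using the Kato/Schur-complement formula
\[
\lambda=\mu_0-\im\e\langle \cR f_0,1\rangle/\langle f_0,1\rangle-\e^2\langle \cR\,\tilde S(\lambda)\,\cR f_0,1\rangle+\dots,
\]
where $\tilde S$ is the reduced resolvent of $\cM$ on the complement.

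The first-order term is the main one. Since $\langle U,1\rangle=0$ and $\Pi_0\big((U''-\beta)(-\partial_{yy}+\e^2)^{-1}\Pi_{\neq}g\big)=\langle U'',(-\partial_{yy}+\e^2)^{-1}g\rangle$, it becomes
\[
-\im\e\langle \cR f_0,1\rangle=-\im\e\big[\langle U,g\rangle+\langle U'',(-\partial_{yy}+\e^2)^{-1}g\rangle\big].
\]
Integrating by parts, the two pieces combine (up to $O(\e^2)$ corrections from the $\e^2$ shifts) into
\[
\frac{\e^2}{\nu}\,\Big\langle U,\big(\partial_y^4\ \text{-type}\big)\ \text{resolvent applied to } U''\Big\rangle ,
\]
after inserting $g$ and taking real parts. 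Concretely, $\Re\big[\frac{1}{\nu\partial_{yy}-\im\beta/\e}\big]$ applied against $\partial_{yy}^{-1}$ yields
\[
-\frac{\e^2}{\nu}\Big\langle U,\big(\partial_y^4+\tfrac{\beta^2}{\e^2\nu^2}\big)^{-1}U''\Big\rangle .
\]
Together with $\Re\mu_0=-\nu\e^2=\frac{\e^2}{\nu}(-\nu^2)$, this gives the main terms of \eqref{eigenvalue-beta=nueps}.

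\textbf{Remainder.} It remains to bound:
\begin{itemize}
\item the second- and higher-order Kato terms;
\item the $\e^2$ corrections from $\cD_\e$ versus $\partial_{yy}$;
\item the normalization $\langle f_0,1\rangle=1$, which is exact.
\end{itemize}
Each extra factor $\e\cR\tilde S$ contributes a factor $\frac{\e}{\nu}(1+\beta)$ relative to the leading $\e^2/\nu$ scale, with additional $(1+\beta)$ factors from the $\beta$ in $\cR$ and in $g$. This produces the $\cO(\frac{\e}{\nu}(1+\beta)^3)$ error.

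\textbf{Main obstacle.} The hardest step will be the uniform resolvent bounds in the regime where $\beta/\e$ is huge compared with $\nu$. There $\cM$ is far from self-adjoint, so the bounds must track carefully how the $\frac1\e$ off-diagonal block is compensated by the $\beta/\e$ imaginary shift. I would first try explicit Fourier-multiplier bounds $|\nu(n^2+\e^2)+\im\beta/\e|^{-1}$ for $n\neq0$, which make all estimates diagonal.
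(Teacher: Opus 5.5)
The expansion step in your proposal is sound, but the existence step fails as written: the smallness you claim for the Neumann series in the resolvent bound is not true in the theorem's regime.

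\textbf{The resolvent bound on the contour.} You claim $\e\,\cR_{\e,\beta}(\cM-z)^{-1}$ is small whenever $\frac{\e}{\nu}(1+\beta)<\delta_U$. Your own count gives $(1+\beta)\min(1/\nu,\e/\beta)$. You also dropped the factor $(\mu_0-z)^{-1}\sim\nu^{-1}$ carried by the off-diagonal block $\frac{\im}{\e(\mu_0-z)}(\nu\cD_\e-z)^{-1}U''\,\Pi_0$ of $(\cM-z)^{-1}$; here $\mu_0=-\nu\e^2+\im\beta/\e$ in your normalization. This quantity is not controlled by $\frac{\e}{\nu}(1+\beta)$. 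Take $\nu=1$, $\beta=2\e$: both hypotheses of the theorem hold for every small $\e$. Yet $\e\cR_{\e,\beta}(\cM-z)^{-1}1$ contains $\frac{\im}{\mu_0-z}\cR_{\e,\beta}(\nu\cD_\e-z)^{-1}U''$, which does not tend to zero as $\e\to0$ for a generic $U$. So the Neumann series you invoke is not justified.

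The missing idea is to use the cancellation $\Pi_0\cR_{\e,\beta}f=\e^2\langle(-\de_{yy}+\e^2)^{-1}U,f\rangle$ of Lemma~\ref{lem:Cancelation} at this point. The factors must be ordered as $\im\e(\cM-z)^{-1}\cR_{\e,\beta}$, so that the $1/\e$ block only ever acts on $\Pi_0\cR_{\e,\beta}f=O(\e^2)$. This is exactly \eqref{auxiteration1} in Lemma~\ref{lem:inverseMR}: that piece becomes $O(\e^2/\nu^2)$, and the rest is bounded by $\frac{2\e}{\nu}(\|U\|_{C^2}+\beta)$. You use the cancellation for the first-order eigenvalue term, but it is just as indispensable for existence. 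A Schur-complement version needs it too: the coupling $\Pi_0\cL_{\nu,\e,\beta}\Pi_{\neq}$ must be $O(\e^3)$ to offset the $O(1/\e)$ entry $\Pi_{\neq}\cL_{\nu,\e,\beta}\Pi_0=-\frac{\im}{\e}U''-\im\e U$.

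\textbf{The rank of the Riesz projection.} Even with a correct resolvent bound, ``a Neumann series then gives a simple eigenvalue'' skips the rank count. $P-Q$ is not small in norm: its component $W^{(0)}_{\nu,\e,\beta}\Pi_0$ is only bounded by $C_U(1+\beta)/\nu^2$. That is why the paper needs Lemmas~\ref{lem:P-Q} and~\ref{lem:KatoIso}. You would need to supply one of the following:
\begin{itemize}
\item that argument;
\item a homotopy $t\mapsto\cM-\im t\e\cR_{\e,\beta}$ with resolvent bounds uniform in $t$;
\item a simple-root argument for the scalar Schur equation.
\end{itemize}

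\textbf{The expansion route.} Once existence is settled, your approach is simpler than the paper's. Since $\Pi_0\cM_{\nu,\e,\beta}=\mu_0\Pi_0$ on its whole domain, projecting $\cL\phi=\lambda\phi$ onto the zero mode gives the exact identity $(\lambda-\mu_0)\Pi_0\phi=-\im\e^3\langle(-\de_{yy}+\e^2)^{-1}U,\phi\rangle$. So you only need the leading part $\frac{\im}{\e\nu}F_{\nu,\e,\beta}$ of $\Pi_{\neq}\phi$. The paper instead expands $V$ and $\cL V$ to second order inside the Rayleigh quotient $\langle\cL V,V\rangle/\|V\|^2$, and must also bound $\|V\|$ from below.
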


\begin{Remark}\label{rem1}
We may write
\begin{equation}\label{rep-Dbeta}
\left \langle U, \left( \partial_{y}^4 + \frac{\beta^2}{\e^2 \nu^2} \right)^{-1} U'' \right\rangle = -\left\| \left( \partial_{yy} - \frac{\im \beta}{\e \nu} \right)^{-1} U' \right\|_{L^2}^2.
\end{equation}
This identity together with the expansion \eqref{eigenvalue-beta=nueps} shows that the operator $\partial_{yy}^{-1}$, which governs the leading term of the non-rotating expansion in \cite{Colombo}, is replaced, in the presence of rotation, by the rotation-modified operator $\big(\partial_{yy}-\tfrac{\im\beta}{\e\nu}\big)^{-1}$, which encodes the interaction between the long-wave scale $\varepsilon$ and the Coriolis parameter $\beta$. When $\beta=0$, the above expansion reduces to the result obtained in \cite{Colombo}. On the other hand, note that $\langle U, (\partial_{y}^4 + \frac{\beta^2}{\e^2 \nu^2})^{-1} U'' \rangle \leq \frac{\e^2 \nu^2}{\beta^2} \|U''\|_{L^2}$, so for fixed $U$, $\beta$ and $\nu$, the real part of $\lambda_{\nu,\e,\beta}^{(0)}$ remains negative for sufficiently small $\e>0$, in sharp contrast with the non-rotating case where long-wave instability occurs under the condition \eqref{Yudovich} identified by Yudovich. Thus, the planetary rotation can suppress the long-wave instability.
\end{Remark}

\begin{Remark}\label{rem1.2}
From \eqref{eigenvalue-beta=nueps} and \eqref{rep-Dbeta}, we may characterize the transition between stability and instability. In particular, under the critical scaling
\begin{equation}\label{scaling-beta=Ke}
  \beta = K \e,
\end{equation}
we obtain a criterion extending Yudovich's condition \eqref{Yudovich}: Under the assumptions of Theorem \ref{thm:main-1}, and for sufficiently small $\e>0$, the operator $\cL_{\nu,\e,\beta}$ possesses an unstable eigenvalue provided that
\begin{equation}\label{Yudovich-beta=Ke}
  \left\| \left( \partial_{yy} - \frac{\im K}{\nu} \right)^{-1} U' \right\|_{L^2} > \nu.
\end{equation}
This explicit threshold, depending on the shear profile and the ratio $K$, shows that sufficiently strong rotation can suppress the instability mechanism of \cite{Colombo}, whereas weaker rotation allows it to persist: long-wave instability and long-wave stability now coexist, separated by the threshold \eqref{Yudovich-beta=Ke}.
\end{Remark}

From a technical perspective, establishing Theorem \ref{thm:main-1} requires a perturbative spectral analysis that remains uniform as $\varepsilon\to0$, for $\beta$ ranging over the whole admissible regime $\varepsilon<\beta/\nu$. This uniformity is the main obstacle absent from the non-rotating case: the inclusion of the Coriolis term makes all constants in the resolvent estimates, spectral projections, and Neumann series inversions of \cite{Colombo} $\beta$-dependent, and since $\beta$ shifts the principal eigenvalue of $\mathcal M_{\nu,\varepsilon,\beta}$ off the real axis, the contours defining the Riesz projections, which rely on uniform resolvent bounds, must be controlled uniformly in $\beta$ throughout this regime. We resolve this by imposing the smallness condition $\e<\beta/\nu$ from Theorem \ref{thm:main-1}, which guarantees the required uniform resolvent bounds and hence well-defined spectral projections.

A second obstacle concerns the eigenvalue expansion itself. Rather than approximating the perturbed eigenfunctions directly by the unperturbed ones, as is done in \cite{Colombo}, we systematically expand every component of $\mathcal L_{\nu,\varepsilon,\beta}$ and of the associated eigenvector $V_{\nu,\varepsilon,\beta}$ to second order in $\varepsilon$, and recompute the eigenvalue expansion at this order. This refined expansion shows that all first-order terms cancel and that the surviving second-order terms are either real, and responsible for the competition captured in \eqref{eigenvalue-beta=nueps}, or purely imaginary and hence irrelevant to spectral stability. Interestingly, up to second order the rotating dynamics thus mirrors the non-rotating one, with rotation entering only through the modified operator identified above. The critical scaling $\beta=K\varepsilon$ used in \eqref{scaling-beta=Ke} is then a special case of this general regime, singled out because it makes the two effects comparable and yields the explicit threshold \eqref{Yudovich-beta=Ke}.

\subsection{The structure of the paper}

The remainder of the paper is organized as follows: In Section \ref{sec operators}, we analyze and present some basic properties of the key operators $\cM_{\nu, \e ,\beta}$ and $\cR_{\e,\beta}$ defined in \eqref{Operator-M}--\eqref{Operator-R}. 
Section \ref{sec spectral} is devoted to spectral projections and resolvent bounds for these operators, establishing estimates crucial to the perturbative analysis. In Section \ref{sec Kato}, we apply Kato’s reduction method to relate the spectrum and the eigenspace decomposition of the full operator $\cL_{\nu,\e,\beta}$ to those of the unperturbed operator $\cM_{\nu,\e,\beta}$, providing a framework to compute eigenvalue expansions. Finally in Section \ref{sec expansion}, we derive the asymptotic expansion of the principal eigenvalue $\lambda_{\nu,\e,\beta}^{(0)}$ in the long-wave parameter $\e$, carefully tracking the contributions from the rotation parameter $\beta$.

\section{The Operators \texorpdfstring{$\cM_{\nu,\e,\beta}$}{M} and \texorpdfstring{$\cR_{\e,\beta}$}{R}}\label{sec operators}

In this section, we analyze the leading and perturbative components of the operator $\cL_{\nu, \e ,\beta}$, as defined in \eqref{Operator-M} and \eqref{Operator-R}. Throughout, we follow the general approach of Section 2.1 in \cite{Colombo}, developed there for the non-rotating case, adapting it to account for the rotation parameter $\beta$. We begin by establishing some useful cancellations.
\begin{Lemma}\label{lem:Cancelation}
Let $\cR_{\e,\beta}$ denote the operator defined in \eqref{Operator-R}. Then, the following estimate holds:
\begin{equation}\label{Bound-R}
  \|\cR_{\e,\beta}\|_{\cB(L^2,L^2)}\leq \|U\|_{C^2}+\beta.
\end{equation}
Moreover, for all $f \in L^2(\TT)$, and we also have
\begin{equation}\label{Cancelation}
    \Pi_0 \cR_{\e,\beta}  f = \e^2 \left \langle (\de_y+\e)^{-1} U,(\de_y+\e)^{-1} \Pi_{\neq} f \right\rangle_{L^2}.
\end{equation}
In particular, $\Pi_0 \cR_{\e,\beta} 1=0$ and
\begin{equation}\label{estimatePi0cR}
    \|\Pi_0 \cR_{\e,\beta}\|_{\cB(L^2,\RR)} = \e^2 \| (-\de_{yy}+\e^2)^{-1} U \|_{L^2} \leq \e^2 \|U\|_{L^2}.
\end{equation}
\end{Lemma}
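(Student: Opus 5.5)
The plan is to verify each claim directly from the definition \eqref{Operator-R}, working in Fourier modes in $y$. On $\Pi_{\neq}L^2$ the operator $(-\partial_{yy}+\e^2)^{-1}$ is the Fourier multiplier $(n^2+\e^2)^{-1}$ with $n\neq 0$. Its norm is therefore at most $(1+\e^2)^{-1}\le 1$, and it maps $\Pi_{\neq}L^2$ into mean-zero functions in $H^2$. Throughout I take $\Pi_0 g$ to be identified with the average of $g$, and $\langle\cdot,\cdot\rangle_{L^2}$ to be normalized consistently with that identification (so that $\Pi_0 g=\langle g,1\rangle$). I take $\|U\|_{C^2}$ to dominate $\|U\|_{L^\infty}+\|U''\|_{L^\infty}$.

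\emph{Bound \eqref{Bound-R}.} Multiplication by $U$ has $L^2$-norm at most $\|U\|_{L^\infty}$. Multiplication by $U''-\beta$ has norm at most $\|U''\|_{L^\infty}+\beta$. By the previous paragraph, $(-\partial_{yy}+\e^2)^{-1}\Pi_{\neq}$ has norm at most $1$. Adding the two contributions gives $\|\cR_{\e,\beta}\|\le \|U\|_{C^2}+\beta$.

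\emph{Identity \eqref{Cancelation}.} Set $g:=(-\partial_{yy}+\e^2)^{-1}\Pi_{\neq}f$, so that $g$ has zero mean and $-g''+\e^2 g=\Pi_{\neq}f$. Then
\[
\Pi_0\cR_{\e,\beta}f=\langle U,f\rangle+\langle U'',g\rangle-\beta\langle 1,g\rangle .
\]
The $\beta$-term vanishes because $g$ has mean zero; this is the key point, since the rotation parameter drops out of $\Pi_0\cR_{\e,\beta}$. Integrating by parts twice on $\TT$ gives
\[
\langle U'',g\rangle=\langle U,g''\rangle=\langle U,\e^2 g-\Pi_{\neq}f\rangle .
\]
Since $U$ has mean zero, $\langle U,f\rangle-\langle U,\Pi_{\neq}f\rangle=\langle U,\Pi_0 f\rangle=0$. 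Hence
\[
\Pi_0\cR_{\e,\beta}f=\e^2\langle U,(-\partial_{yy}+\e^2)^{-1}\Pi_{\neq}f\rangle .
\]
To reach the stated form, factor $-\partial_{yy}+\e^2=(-\partial_y+\e)(\partial_y+\e)$. Both factors are invertible for $\e>0$, and they commute and preserve $\Pi_{\neq}$. Using that the $L^2$-adjoint of $(\partial_y+\e)^{-1}$ is $(-\partial_y+\e)^{-1}$ yields \eqref{Cancelation}. In particular, $\Pi_0\cR_{\e,\beta}1=0$ since $\Pi_{\neq}1=0$.

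\emph{Norm \eqref{estimatePi0cR}.} Put $h:=(-\partial_{yy}+\e^2)^{-1}U$, which is well defined and mean-zero because $U=\Pi_{\neq}U$. The functional is $f\mapsto \e^2\langle h,\Pi_{\neq}f\rangle=\e^2\langle h,f\rangle$. By Cauchy--Schwarz, with equality at $f=h$, its norm is exactly $\e^2\|h\|_{L^2}$. Finally, the multiplier bound $(n^2+\e^2)^{-1}\le 1$ for $n\ne 0$ gives $\|h\|_{L^2}\le\|U\|_{L^2}$.

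No step here is a genuine obstacle. The only point needing care is the bookkeeping of the normalization of $\Pi_0$ against the inner product, together with checking the adjoint and commutation relations used in the factorization. The substantive observation is that the $\beta$-contribution is annihilated by $\Pi_0$ because $(-\partial_{yy}+\e^2)^{-1}\Pi_{\neq}$ produces mean-zero functions.
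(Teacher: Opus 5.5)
Your proof is correct and follows the paper's argument essentially step for step. It uses the same termwise bound via $\|(-\de_{yy}+\e^2)^{-1}\Pi_{\neq}\|\le 1$, and the same observation that the $\beta$-term is annihilated because $(-\de_{yy}+\e^2)^{-1}\Pi_{\neq}f$ has mean zero. It also uses the same double integration by parts producing $\e^2\langle U,(-\de_{yy}+\e^2)^{-1}\Pi_{\neq}f\rangle$, followed by the factorization through $(\de_y+\e)^{-1}$, and the same extremizer $f=(-\de_{yy}+\e^2)^{-1}U$ for the norm identity.

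One cosmetic fix concerns your normalization remark. Since \eqref{Cancelation} is linear in $f$, the pairing must be linear in its second slot, so the identification should read $\Pi_0 g=\langle 1,g\rangle$ rather than $\langle g,1\rangle$; this is in fact what your displayed formulas use.
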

\begin{proof}
By the defintion of $\cR_{\e,\beta}$, for any $f \in L^2$, we have
\begin{align*}
  \|\cR_{\varepsilon,\beta} f\|_{L^2} & = \| U f + (U''-\beta)(-\partial_{yy}+\varepsilon^2)^{-1}\Pi_{\neq}f\|_{L^2} \\
  & \leq \|U\|_{\infty} \|f\|_{L^2} + \|U''\|_{\infty} \|(-\partial_{yy}+\varepsilon^2)^{-1}\Pi_{\neq}f\|_{L^2} + \beta \|(-\partial_{yy}+\varepsilon^2)^{-1}\Pi_{\neq}f\| _{L^2} \\
  & \leq (\|U\|_{C^2} + \beta) \|f\|_{L^2},
\end{align*}
which yields \eqref{Bound-R}. Here, we used the fact
$$
\|(-\partial_{yy}+\varepsilon^2)^{-1}\Pi_{\neq}f\|_{L^2} = \left( \sum_{\ell \neq 0} \frac{|f_{\ell}|^2}{(\ell^2 + \e^2)^2} \right)^{\frac{1}{2}} \leq \|f\|_{L^2}.
$$

Next, since $U$ has zero average on $\mathbb{T}$, for $f \in L^2$ we compute
\begin{align*}
\Pi_0\cR_{\e,\beta} f &= \langle U , \Pi_{\neq}f \rangle + \langle U''-\beta, (-\de_{yy}+\e^2)^{-1} \Pi_{\neq}f \rangle  \\
& = \langle U , \Pi_{\neq}f \rangle + \langle U'', (-\de_{yy}+\e^2)^{-1} \Pi_{\neq}f \rangle \\
& = \langle U , \Pi_{\neq}f \rangle + \langle U, \de_{yy}(-\de_{yy}+\e^2)^{-1} \Pi_{\neq}f \rangle \\
& = \langle U , \Pi_{\neq}f \rangle + \langle U, (\de_{yy} - \e^2 + \e^2)(-\de_{yy}+\e^2)^{-1} \Pi_{\neq}f \rangle \\
& = \e^2 \langle U, (-\de_{yy}+\e^2)^{-1} \Pi_{\neq}f \rangle \\
& = \e^2 \langle (-\de_{yy}+\e^2)^{-1} U, \Pi_{\neq} f \rangle \\
& = \e^2 \langle (\de_{y} + \e)^{-1} U, (\de_{y} + \e)^{-1} \Pi_{\neq} f \rangle ,
\end{align*}
which leads to \eqref{Cancelation}. Here, we have used the self-adjointness of $(-\de_{yy}+\e^2)^{-1}$ in the fifth line. For \eqref{estimatePi0cR}, viewing $\Pi_0\cR_{\e,\beta}$ as the linear functional
$$
f\mapsto \e^2\langle (-\de_{yy}+\e^2)^{-1}U, \Pi_{\neq}f\rangle = \e^2\langle (-\de_{yy}+\e^2)^{-1}U, f\rangle.
$$
The last equality holds since $U$ has no zero Fourier mode and $(-\de_{yy}+\e^2)^{-1}U$ then has zero average. Then the Riesz representation theorem gives
\[
\|\Pi_0 \cR_{\e,\beta}\|_{\cB(L^2,\RR)} = \sup_{\|f\|_{L^2}=1} \e^2 |\langle (-\de_{yy}+\e^2)^{-1}U, f\rangle| = \e^2 \|(-\de_{yy}+\e^2)^{-1}U\|_{L^2},
\]
with the extremizer $f=(-\de_{yy}+\e^2)^{-1}U/\|(-\de_{yy}+\e^2)^{-1}U\|_{L^2}$. This proves the equality in \eqref{estimatePi0cR}. The inequality follows since $(-\de_{yy}+\e^2)^{-1}$ has operator norm at most $1$ on $\Pi_{\neq}$-modes (as $\ell^2+\e^2\geq 1$ for every $\ell \neq 0$).
\end{proof}
We now record basic spectral properties of $\mathcal{M}_{\nu, \varepsilon,\beta}$, in the same spirit as Lemma 2.3 in \cite{Colombo}; the main difference is that the presence of $\beta$ shifts the principal eigenvalue $\mu_0$ off the real axis. We denote $\sigma_{L^2}(A)$ and $\rho_{L^2}(A)$ by the spectrum and the resolvent set of an densely defined operator $A$ acting on $L^2(\TT)$, respectively.
\begin{Lemma}\label{lem:Spectrum-M}
Let $\e  \neq 0 $ and $\cM_{\nu, \e,\beta}: \cD(\cM_{\nu, \e,\beta}) =  H^2(\TT) \subset L^2(\TT) \to L^2(\TT)$ be the operator defined in \eqref{Operator-M} and \eqref{Operator-R}. Then,
\begin{equation}
 \sigma_{L^2}(\cM_{\nu, \e,\beta}) = \left\{ -\nu\e^2 +\frac{\im \beta}{\e} \right\} \cup \{-\nu(j^2+\e^2)\}_{j\in \mathbb{Z} \setminus \{0\}} .
\end{equation}
Moreover, denoting $\cD_{\nu,\e,\beta} := \partial_{yy} - \frac{\im \beta}{\e \nu}$, the set of eigenfunctions $\{\cD_{\nu,\e,\beta}^{-1}(U''(y))-\im \e \nu ,e^{\im j y}\}_{j\in \ZZ \setminus \{0\} }$ associated with the eigenvalues $\{-\nu\e^2 +\frac{\im \beta}{\e},-\nu(j^2+\e^2)\}_{j\in \mathbb{Z} \setminus \{0\}}$, is a basis of $L^2(\TT)$.
\end{Lemma}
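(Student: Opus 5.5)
Since $\cM_{\nu,\e,\beta}=\nu\cD_\e-\frac{\im}{\e}(U''-\beta)\Pi_0$ is a rank-one perturbation of the diagonal operator $\nu\cD_\e$, I would work in Fourier variables and split $L^2=\mathrm{span}\{1\}\oplus \Pi_{\neq}L^2$. The perturbation acts only through $\Pi_0 f=\langle f\rangle$ and takes values in $\mathrm{span}\{U''-\beta\}$. Because $U''$ has zero mean, its zero-mode component is $-\beta$ times the constant function, and its nonzero-mode part is $U''$ itself. Hence, with respect to this splitting, $\cM_{\nu,\e,\beta}$ is block upper triangular. On nonzero modes it acts as $\nu\cD_\e$, so $e^{\im jy}\mapsto -\nu(j^2+\e^2)e^{\im jy}$. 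On constants it acts by $1\mapsto -\nu\e^2+\frac{\im\beta}{\e}-\frac{\im}{\e}U''$, whose zero-mode part is $-\nu\e^2+\frac{\im\beta}{\e}$ and whose off-diagonal part is $-\frac{\im}{\e}U''\in\Pi_{\neq}L^2$.

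\emph{Eigenfunctions.} Each $e^{\im jy}$ with $j\neq0$ is clearly an eigenfunction, since $\Pi_0 e^{\im jy}=0$. For the remaining eigenvalue $\mu_0=-\nu\e^2+\frac{\im\beta}{\e}$, I would look for an eigenfunction $\phi=1+g$ with $g\in\Pi_{\neq}L^2$. The equation $(\cM-\mu_0)\phi=0$ reduces to
\[
\nu(\de_{yy}-\e^2)g-\tfrac{\im}{\e}U''-\mu_0 g=0,
\qquad\text{that is,}\qquad
\nu\Big(\de_{yy}-\tfrac{\im\beta}{\e\nu}\Big)g=\tfrac{\im}{\e}U''.
\]
Since $\beta>0$, the operator $\cD_{\nu,\e,\beta}$ is invertible on $L^2$, with symbol $-j^2-\frac{\im\beta}{\e\nu}\neq0$, so $g=\frac{\im}{\e\nu}\cD_{\nu,\e,\beta}^{-1}U''$. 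Multiplying $\phi$ by $-\im\e\nu$ gives exactly the stated eigenfunction $\cD_{\nu,\e,\beta}^{-1}U''-\im\e\nu$. If $\beta=0$, one uses $\de_{yy}^{-1}$ on mean-zero functions instead, as in \cite{Colombo}.

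\emph{Spectrum.} To show there is no other spectrum, I would solve $(\cM-\lambda)f=h$ explicitly for $\lambda$ outside the stated set. Taking the zero mode gives $(\mu_0-\lambda)\langle f\rangle=\langle h\rangle$, which determines $\langle f\rangle$. The nonzero modes then solve
\[
(\nu\cD_\e-\lambda)\Pi_{\neq}f=\Pi_{\neq}h+\tfrac{\im}{\e}\langle f\rangle U''.
\]
This is solvable in $H^2$, with a bounded inverse, since $\lambda\neq-\nu(j^2+\e^2)$ for all $j\neq0$ and $|{-\nu(j^2+\e^2)}-\lambda|\gtrsim j^2$ for large $j$. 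So the resolvent is bounded, and the spectrum is exactly the listed eigenvalues, because $\cM$ has compact resolvent.

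\emph{Basis property.} The stated family is the standard basis $\{e^{\im jy}\}_{j\neq0}$ together with $\phi_0=-\im\e\nu+\cD_{\nu,\e,\beta}^{-1}U''$. The function $\phi_0$ has nonzero zero mode $-\im\e\nu$, because $\cD_{\nu,\e,\beta}^{-1}U''$ is mean zero. Any $f\in L^2$ decomposes uniquely as
\[
f=\frac{\langle f\rangle}{-\im\e\nu}\,\phi_0+\sum_{j\neq0}c_j e^{\im jy},
\]
where the $c_j$ are the Fourier coefficients of $f-\frac{\langle f\rangle}{-\im\e\nu}\phi_0$. This defines a bounded invertible change of basis from the orthonormal Fourier basis, so the family is a Riesz basis.

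The main point requiring care is the one I would check explicitly: that $\mu_0$ does not coincide with any $-\nu(j^2+\e^2)$, so that $\mu_0$ is simple and the eigenvalue list is complete. This holds because $\Im\mu_0=\beta/\e\neq0$, whereas the other eigenvalues are real.
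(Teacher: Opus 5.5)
Your proof is correct, and its core is the same as the paper's. You both treat $\cM_{\nu,\e,\beta}$ as a rank-one perturbation of $\nu\cD_\e$ acting through the zero mode. Your ansatz $\phi=1+g$ produces exactly the eigenfunction the paper gets by applying $\cM_{\nu,\e,\beta}$ directly to $\cD_{\nu,\e,\beta}^{-1}(U''-\beta)=\cD_{\nu,\e,\beta}^{-1}(U'')-\im\e\nu$ and using $\cD_\e-\cD_{\nu,\e,\beta}=-\e^2+\frac{\im\beta}{\e\nu}$.

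Where you differ is in completeness. The paper's proof only exhibits the eigenpairs. It then infers the basis property from the nonzero zero mode, i.e.\ from linear independence. So it never shows that there is no further spectrum, and it does not justify why a linearly independent family is a basis in infinite dimensions.

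Your block-triangular solution of $(\cM_{\nu,\e,\beta}-\lambda)f=h$ supplies the missing inclusion $\sigma_{L^2}(\cM_{\nu,\e,\beta})\subseteq\{-\nu\e^2+\frac{\im\beta}{\e}\}\cup\{-\nu(j^2+\e^2)\}_{j\neq0}$ directly. Because of that, your closing appeal to compactness of the resolvent is not actually needed. This route also has an advantage over the Sherman--Morrison resolvent of Lemma \ref{lem:ResolventcM}: you do not have to exclude $\lambda=-\nu\e^2$, which is not an eigenvalue once $\beta\neq0$.

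Likewise, you describe the family as the image of the Fourier basis under a bounded invertible map (identity plus a rank-one term, sending $1\mapsto\phi_0$). That is the right way to make the paper's ``linearly independent, hence a basis'' rigorous, and it shows the family is in fact a Riesz basis.
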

\begin{proof}
For $j \neq 0$, it is clear that $\cM_{\nu, \e,\beta}(e^{\im jy}) = \nu \cD_{\e}(e^{\im jy}) = -\nu(j^2+\e^2) e^{\im jy}$. For $j=0$, we verify that
\begin{align*}
  \cM_{\nu, \e,\beta} (\cD_{\nu,\e,\beta}^{-1}(U''-\beta)) & = \nu \cD_{\e}\cD_{\nu,\e,\beta}^{-1}(U''-\beta) - \nu(U''-\beta) \\
  & = \nu (\cD_{\e} - \cD_{\nu,\e,\beta}) \cD_{\nu,\e,\beta}^{-1}(U''-\beta) \\
  & = \Big(-\nu \e^2 + \frac{\im \beta}{\e}\Big) \cD_{\nu,\e,\beta}^{-1}(U'' -\beta) .
\end{align*}
On the other hand, note that $\cD_{\nu,\e,\beta}^{-1}(U'' -\beta) = \cD_{\nu,\e,\beta}^{-1}(U''(y))-\im \e \nu$ and thus has nonzero 0-th Fourier mode $-\im \e \nu$, so it is linearly independent from the eigenvectors $\{e^{\im jy}\}_{j \in \ZZ \setminus \{0\}}$. Thus $\{\cD_{\nu,\e,\beta}^{-1}(U''(y))-\im \e \nu ,e^{\im j y}\}_{j\in \ZZ \setminus \{0\} }$ forms a basis of $L^2(\TT)$.
\end{proof}
For later use, we introduce the notation
\begin{equation}\label{exp-F}
  F_{\nu,\e,\beta} \coloneqq \cD_{\nu,\e,\beta}^{-1}(U'')
\end{equation}
for the $y$-dependent part of the $\mu_0$-eigenfunction of $\cM_{\nu,\e,\beta}$ appearing in Lemma \ref{lem:Spectrum-M}, so that $F_{\nu,\e,\beta}(y)-\im\e\nu$ is precisely that eigenfunction.

For simplicity of the calculation later, we denote the eigenvalues of $\frac{1}{\nu}\cM_{\nu, \e,\beta}$ by $\mu_j$, namely
\[
\mu_j =
\begin{cases}
  -\e^2 + \frac{\im \beta}{\e \nu}, & \mbox{if $j =0$} \\
  -(j^2+\e^2), & \mbox{if $j \neq 0$}.
\end{cases}
\]

Note that $\mu_0$ is an isolated simple eigenvalue, while $\mu_j$, $j\neq 0$, are isolated double eigenvalues. In computing the resolvent of $\cM_{\nu,\e,\beta}$, we notice that since $\cM_{\nu,\e,\beta}$ is a rank-$1$ modification of the operator $\nu \cD_{\e}$, the resolvent of $\cM_{\nu,\e,\beta}$ has a more explicit form. In order to show that, we refer to \cite{Deng} for the following general Sherman-Morrison formula.

\begin{Lemma}\label{lem:ShermanMorrisoninf}
Let $H$ be a Hilbert space with inner product $\langle\cdot,\cdot\rangle_H$, $\mathcal{A}:D(\mathcal{A})\subset H \longrightarrow H $ be a closed, densely-defined, linear operator with a bounded inverse $\mathcal{A}^{-1}$, and $f,g \in H$.
Then, the operator $\mathcal{A} + f\langle g,\cdot \rangle_H$ is invertible if and only if $\langle g,\mathcal{A}^{-1} f\rangle_H+1\neq 0$. Moreover,
\begin{equation}\label{originalShermanMorrison}
\left( \mathcal{A}+f\langle g,\cdot\rangle_H \right)^{-1}=\mathcal{A}^{-1}-\frac{\mathcal{A}^{-1}(f\langle g,\cdot\rangle_H)\mathcal{A}^{-1}}{1+\langle g,\mathcal{A}^{-1} f\rangle_H}.
\end{equation}
\end{Lemma}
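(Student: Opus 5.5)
The plan is to prove the formula directly. We verify that the proposed right-hand side is a two-sided inverse of $\mathcal{B}:=\mathcal{A}+f\langle g,\cdot\rangle_H$, and then use a kernel argument for the converse. Set $c:=1+\langle g,\mathcal{A}^{-1}f\rangle_H$. Note that $\mathcal{B}$ has domain $D(\mathcal{A})$, since $f\langle g,\cdot\rangle_H$ is bounded and of rank one. Moreover $\mathcal{B}$ is closed, because a bounded perturbation of a closed operator is closed. The candidate inverse is
\[
\mathcal{C}h:=\mathcal{A}^{-1}h-\frac{\langle g,\mathcal{A}^{-1}h\rangle_H}{c}\,\mathcal{A}^{-1}f ,
\]
which is exactly the right-hand side of \eqref{originalShermanMorrison} applied to $h$. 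It is bounded and maps $H$ into $D(\mathcal{A})$.

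\textbf{Sufficiency ($c\neq0$).} For $h\in H$ we compute
\[
\mathcal{B}\mathcal{C}h=h-\frac{\langle g,\mathcal{A}^{-1}h\rangle}{c}f+f\Big(\langle g,\mathcal{A}^{-1}h\rangle-\frac{\langle g,\mathcal{A}^{-1}h\rangle}{c}\langle g,\mathcal{A}^{-1}f\rangle\Big).
\]
The coefficient of $f$ is $\langle g,\mathcal{A}^{-1}h\rangle\,\big(-1+c-\langle g,\mathcal{A}^{-1}f\rangle\big)/c=0$, so $\mathcal{B}\mathcal{C}h=h$.

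For the other direction, take $u\in D(\mathcal{A})$ and put $h=\mathcal{B}u=\mathcal{A}u+\langle g,u\rangle f$. Then $\mathcal{A}^{-1}h=u+\langle g,u\rangle\mathcal{A}^{-1}f$, and pairing with $g$ gives $\langle g,\mathcal{A}^{-1}h\rangle=c\langle g,u\rangle$. Substituting into the formula for $\mathcal{C}$ yields $\mathcal{C}h=u$, so $\mathcal{C}\mathcal{B}=\mathrm{Id}$ on $D(\mathcal{A})$. Hence $\mathcal{B}$ is bijective with bounded inverse $\mathcal{C}$, which is \eqref{originalShermanMorrison}.

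\textbf{Necessity ($c=0$).} Let $u_0:=\mathcal{A}^{-1}f$. Then
\[
\mathcal{B}u_0=f+\langle g,\mathcal{A}^{-1}f\rangle f=c f=0 .
\]
If $u_0\neq0$, then $\mathcal{B}$ has a nontrivial kernel and is not invertible. The only delicate point is the degenerate case $f=0$, where $u_0=0$. But then $c=1\neq0$, so the case $c=0$ never has $f=0$, and injectivity of $\mathcal{A}^{-1}$ gives $u_0\neq0$. This closes the equivalence.

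There is no real obstacle here. The step requiring most care is the domain bookkeeping, namely checking that $\mathcal{C}$ maps into $D(\mathcal{A})$ so that $\mathcal{B}\mathcal{C}$ makes sense, together with excluding $f=0$ in the converse.
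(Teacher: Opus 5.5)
Your proof is correct, and it takes a different route from the paper only in that the paper gives no proof at all: it quotes the generalized Sherman--Morrison formula from Deng and then applies it. Your argument is therefore a self-contained replacement for that citation.

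The two halves are complete:
\begin{itemize}
\item For $c\neq 0$, you verify directly that $\mathcal{C}$ is a two-sided inverse of $\mathcal{B}=\mathcal{A}+f\langle g,\cdot\rangle_H$. The identity $\mathcal{B}\mathcal{C}=\mathrm{Id}_H$ gives surjectivity, and $\mathcal{C}\mathcal{B}=\mathrm{Id}_{D(\mathcal{A})}$ gives injectivity.
\item For $c=0$, the element $\mathcal{A}^{-1}f$ lies in the kernel, and you correctly note that $c=0$ forces $f\neq 0$, so this kernel element is nonzero.
\end{itemize}

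What your version buys over the citation is transparency about which hypotheses are actually used:
\begin{itemize}
\item $\mathcal{A}^{-1}$ is bounded with range $D(\mathcal{A})$;
\item the rank-one term is bounded, so $\mathcal{B}$ has domain $D(\mathcal{A})$;
\item $\langle g,\cdot\rangle_H$ is linear in its second slot, which is implicit in the statement.
\end{itemize}
Closedness and density of $\mathcal{A}$ play no role. This matters because the paper invokes the lemma in two different settings. In Lemma~\ref{lem:ResolventcM}, $\mathcal{A}=\nu(\mathcal{D}_\varepsilon-\zeta)$ is unbounded with domain $H^2(\mathbb{T})$. In Lemma~\ref{lem:KatoIso}, $\mathcal{A}=\mathrm{Id}\pm\mathcal{T}^{(j)}_{\nu,\varepsilon,\beta}$ is bounded. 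Your explicit domain bookkeeping covers both uses directly, without having to check that the cited statement applies to unbounded operators.
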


Applying the preceding lemma, we obtain the resolvent of $\cM_{\nu,\e,\beta}$
\begin{Lemma}\label{lem:ResolventcM}
Assume $\zeta \neq -\e^2$ and $\zeta \neq \mu_j$ for all $j\in \mathbb{Z}$. Then for every $\nu >0$,
\begin{align}\label{cMresolvent}
        (\cM_{\nu,\e,\beta} -\nu\zeta)^{-1} = \frac{1}{\nu} (\cD_\e -\zeta)^{-1}  +  \frac{u_{\e,\beta}(\zeta,y)}{\im \e \nu^2 (\zeta - \mu_0)}\Pi_0,
  \end{align}
where
\[u_{\e,\beta}(\zeta,y) = (\cD_{\e} -\zeta)^{-1} (U''-\beta)=: u_{\e}(\zeta,y) +\frac{\beta}{\e^2 + \zeta}\]
and $u_{\e}(\zeta,y) := (\cD_{\e} -\zeta)^{-1} (U'')$. Moreover, the mapping $\zeta \mapsto u_{\e}(\zeta,\cdot)$ is analytic on the half-plane $\{ z\in\CC\;:\; \Re\, z > -1 \}$.
\end{Lemma}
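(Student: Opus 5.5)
We prove the formula by writing $\cM_{\nu,\e,\beta}-\nu\zeta$ as a rank-one perturbation of $\nu(\cD_\e-\zeta)$ and then applying Lemma \ref{lem:ShermanMorrisoninf}.

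\textbf{Setting up the rank-one form.} Since $\Pi_0 f=\frac{1}{2\pi}\int_{\TT} f = \langle 1, f\rangle$ (with the normalized inner product used above), we have
\[
\cM_{\nu,\e,\beta}-\nu\zeta = \cA + h\langle 1,\cdot\rangle, \qquad \cA:=\nu(\cD_\e-\zeta),\qquad h:=-\frac{\im}{\e}(U''-\beta).
\]
For $\zeta\neq -(j^2+\e^2)$ for every $j\in\ZZ$, the operator $\cA$ is diagonal in Fourier with nonzero symbols $-\nu(j^2+\e^2+\zeta)$. It is therefore closed on $H^2$ with bounded inverse. The hypotheses $\zeta\neq -\e^2$ and $\zeta\neq\mu_j$ for $j\neq 0$ exclude exactly these values.

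\textbf{Computing the Sherman--Morrison denominator.} We have $\cA^{-1}h = -\frac{\im}{\e\nu}u_{\e,\beta}(\zeta,\cdot)$. Because $U''$ has zero mean, $\langle 1,u_\e\rangle=0$, while the constant part $-\beta$ contributes $\frac{\beta}{\e^2+\zeta}$. This gives the stated decomposition of $u_{\e,\beta}$ and
\[
1+\langle 1,\cA^{-1}h\rangle = 1-\frac{\im\beta}{\e\nu(\e^2+\zeta)} = \frac{\zeta+\e^2-\frac{\im\beta}{\e\nu}}{\zeta+\e^2}=\frac{\zeta-\mu_0}{\zeta+\e^2}.
\]
This is nonzero precisely when $\zeta\neq\mu_0$, so Lemma \ref{lem:ShermanMorrisoninf} yields invertibility.

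\textbf{Simplifying the correction term.} The correction term in \eqref{originalShermanMorrison} is $\cA^{-1}h\,\langle 1,\cA^{-1}\cdot\rangle/(1+\langle1,\cA^{-1}h\rangle)$. Since $\cA^{-1}$ commutes with $\Pi_0$ and acts on constants by multiplication by $-\frac{1}{\nu(\zeta+\e^2)}$, we get
\[
\langle 1,\cA^{-1}f\rangle = -\frac{1}{\nu(\zeta+\e^2)}\Pi_0 f .
\]
Substituting, the factors $(\zeta+\e^2)$ cancel and the correction becomes
\[
-\Big(-\frac{\im}{\e\nu}u_{\e,\beta}\Big)\Big(-\frac{1}{\nu}\Big)\frac{\Pi_0 f}{\zeta-\mu_0} = -\frac{\im\,u_{\e,\beta}}{\e\nu^2(\zeta-\mu_0)}\Pi_0 f = \frac{u_{\e,\beta}}{\im\e\nu^2(\zeta-\mu_0)}\Pi_0 f.
\]
Together with $\cA^{-1}=\frac1\nu(\cD_\e-\zeta)^{-1}$, this is exactly \eqref{cMresolvent}. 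The main point needing care is the sign and normalization bookkeeping, in particular the convention for $\Pi_0$ as $\langle 1,\cdot\rangle$, and checking that the cancellation of $(\zeta+\e^2)$ is exact.

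\textbf{Analyticity of $u_\e$.} Because $U''$ has no zero mode, $u_\e(\zeta,\cdot)=\sum_{j\neq0}\frac{-\widehat{U''}_j}{j^2+\e^2+\zeta}e^{\im jy}$. For $\Re\zeta>-1$ and $j\neq 0$ we have $|j^2+\e^2+\zeta|\geq j^2-1+\Re\zeta+\e^2>0$, and this bound is uniform on compact subsets. Each term is therefore analytic, and the series converges locally uniformly in $L^2$, since the denominators are bounded below on compacts and grow like $j^2$. Hence $\zeta\mapsto u_\e(\zeta,\cdot)$ is analytic on $\{\Re z>-1\}$ as an $L^2$-valued map.
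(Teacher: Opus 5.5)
Your proposal is correct and follows the paper's proof essentially line for line. It uses Sherman--Morrison with $\cA=\nu(\cD_\e-\zeta)$ and perturbation $-\frac{\im}{\e}(U''-\beta)\Pi_0$, obtains the denominator $\frac{\zeta-\mu_0}{\zeta+\e^2}$, and proves analyticity by the Fourier-series argument; you also spell out the cancellation of $\zeta+\e^2$ that the paper leaves implicit, and there is one small slip to fix: the bound $|j^2+\e^2+\zeta|\ge j^2-1+\Re\zeta+\e^2>0$ counts the $-1$ twice and is false for $j=\pm1$ when $-1<\Re\zeta\le-\e^2$, so use instead $|j^2+\e^2+\zeta|\ge j^2+\e^2+\Re\zeta>j^2-1+\e^2\ge\e^2>0$.
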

\begin{proof}
  We apply Lemma \ref{lem:ShermanMorrisoninf} with $H=L^2$, $\cA=\nu(\cD_{\e}-\zeta)$, $f=-\frac{\im}{\e} (U''(y)-\beta)$ and $g=1$, where $\cA=\nu(\cD_{\e}-\zeta)$ is invertible, due to the assumption $\zeta \notin \sigma_{L^2}(\cD_{\e}) = \{-(j^2+\e^2)\}_{j\in \mathbb{Z}}$. To check the condition for the invertibility of $\cM_{\nu,\e,\beta} -\nu\zeta$, we expect
  \begin{equation}\label{inv}
  \langle 1, \cA^{-1}(f) \rangle_{L^2} = \Pi_0 \left( -\frac{\im}{\e \nu}(\cD_{\e}-\zeta)^{-1}(U''(y)-\beta) \right)= -\frac{\im \beta}{\e \nu (\e^2 +\zeta)}
  \end{equation}
not to be $-1$. This condition holds if and only if $\zeta \neq \mu_0$, which is guaranteed by our assumption. Thus the operator $\cM_{\nu,\e,\beta} -\nu\zeta$ is invertible and by \eqref{originalShermanMorrison} and \eqref{inv}, we obtain \eqref{cMresolvent}. The mapping
\begin{equation}
\zeta \to u_{\e}(\zeta,y) = -\sum_{j \neq 0} \frac{U_j e^{\im j y}}{j^2+\e^2+\zeta}
\end{equation}
is analytic for $\Re \; \zeta > -1$ since the greatest pole of the mapping is $-\e^2-1<-1$.
\end{proof}

\section{Spectral Projections and Resolvent Bounds}\label{sec spectral}

The idea of the Kato reduction method is to reduce a complicated perturbed operator problem to a simpler effective one, typically associated with an isolated part of the spectrum. In our setting, the goal is to transfer spectral information from the unperturbed operator $\cM_{\nu,\e,\beta}$ to the full operator $\cL_{\nu,\e,\beta}$ by constructing an isomorphism between the corresponding eigenspaces of $\cM_{\nu,\e,\beta}$ and $\cL_{\nu,\e,\beta}$. To this end, we first define the Riesz spectral projections associated with $\cM_{\nu,\e,\beta}$ and $\cL_{\nu,\e,\beta}$. Formally, these are given by
\[
Q_{\nu,\e,\beta} \coloneqq -\frac{1}{2\pi\im} \oint_{\Gamma} (\cM_{\nu,\e,\beta} -\lambda)^{-1} ,\drm \lambda,
\qquad
P_{\nu,\e,\beta} \coloneqq -\frac{1}{2\pi\im} \oint_{\Gamma} (\cL_{\nu,\e,\beta} -\lambda)^{-1} ,\drm \lambda,
\]
where $\Gamma \subset \rho_{L^2}(\cM_{\nu,\e,\beta}) \cap \rho_{L^2}(\cL_{\nu,\e,\beta})$ is a closed contour. By Lemma~\ref{lem:Spectrum-M}, the eigenvalues of $\cM_{\nu,\e,\beta}$ are $\nu\mu_j$, $j \in \NN$. Accordingly, we choose $\Gamma=\nu\Gamma_j$, where $\Gamma_j$ is the circle in the complex plane centered at $\mu_j$ with radius $\tfrac12$.

To ensure that the Riesz projections are well-defined, we require suitable bounds on the resolvent of $\cM_{\nu,\e,\beta}$. By the representation \eqref{cMresolvent}, it suffices to control $\|(\cD_\e -\zeta)^{-1}\|_{\cB(L^2,L^2)}$. In \cite{Colombo}, this is ensured by the fact that $\mathrm{dist}\big(\Gamma_j, \sigma_{L^2}(\cD_\e)\big) = \tfrac12$, since the eigenvalues of $\cD_\e$ coincide with those of $\frac{1}{\nu}\cM_{\nu,\e,0}$. The situation is more delicate when $\beta \neq 0$, as $\cD_\e$ and $\frac{1}{\nu}\cM_{\nu,\e,\beta}$ no longer share the same principal eigenvalues. To address this, we assume \eqref{cond-temp}, which guarantees that
\begin{equation}\label{estimate-D}
 \| (\cD_\e - \zeta)^{-1} \|_{\cB(L^2,L^2)} \leq 2\,, \qquad \forall \zeta \in \Gamma_j, \, j\in \NN.
\end{equation}
After the change of variables $\lambda = \nu \zeta$, we may introduce the auxiliary Riesz projection associated with $\cM_{\nu,\e,\beta}$:
\begin{equation}\label{auxiliaryRieszproj}
    Q_{\nu,\e,\beta}^{(j)} \coloneqq -\frac{\nu}{2\pi\im} \oint_{\Gamma_j} (\cM_{\nu,\e,\beta} -\nu \zeta)^{-1} \drm \zeta \;:\; L^2(\TT) \to L^2(\TT).
\end{equation}
The standard properties of Riesz projection yields $[Q_{\nu,\e,\beta}^{(j)}]^2 = Q_{\nu,\e,\beta}^{(j)}$, and the following decomposition
\begin{equation}
L^2(\TT) = \mathrm{Ran}\, Q_{\nu,\e,\beta}^{(j)} \oplus \mathrm{Ker}\, Q_{\nu,\e,\beta}^{(j)},
\end{equation}
where $\mathrm{Ran}\, Q_{\nu,\e,\beta}^{(j)}$ is the eigenspace of $\cM_{\nu,\e,\beta}$ associated with the eigenvalue $\nu \mu_j$, and $\mathrm{Ker}\, Q_{\nu,\e,\beta}^{(j)}$ is the direct sum of all the remaining eigenspaces of $\cM_{\nu,\e,\beta}$. Analogously, we expect to define the Riesz projection for $\cL_{\nu,\e,\beta}$ along the same contours
\begin{equation}\label{Rieszproj}
    P_{\nu,\e,\beta}^{(j)} \coloneqq -\frac{\nu}{2\pi\im} \oint_{\Gamma_j} (\cL_{\nu,\e,\beta} -\nu\zeta)^{-1} \drm \zeta \;:\; L^2(\TT) \to L^2(\TT).
\end{equation}
However, it is not immediate that $\cL_{\nu,\e,\beta} - \nu\zeta$ is invertible for $\zeta \in \Gamma_j$. To justify that \eqref{Rieszproj} is well defined, we need to establish several technical lemmas. Observe that for every $\zeta \in \Gamma_j$, we have $\nu \zeta \in \rho_{L^2}(\cM_{\nu,\e,\beta})$, and hence, by Lemma \ref{lem:ResolventcM}, the operator $\cM_{\nu,\e,\beta} - \nu \zeta$ is invertible with bounded inverse. It follows that $Q_{\nu,\e,\beta}^{(j)}$ is a well-defined projection. We can isolate the resolvent of $\cM_{\nu,\e,\beta}$ from the resolvent of $\cL_{\nu,\e,\beta}$ to write
\begin{equation}\label{resolventcL1}
  (\cL_{\nu,\e,\beta} -\nu\zeta)^{-1}  = \big( \uno - \im \e (\cM_{\nu,\e,\beta} - \nu \zeta)^{-1}  \cR_{\e,\beta} \big)^{-1} (\cM_{\nu,\e,\beta} -\nu\zeta)^{-1}
\end{equation}
By a Neumann series expansion, it follows that the resolvent of $\cL_{\nu,\e,\beta}$ can be written as
\begin{equation}\label{resolventcL2}
  (\cL_{\nu,\e,\beta} -\nu\zeta)^{-1} =  \sum_{p=0}^{\infty} \big( \im \e  (\cM_{\nu,\e,\beta} -\nu\zeta)^{-1} \cR_{\e,\beta}\big)^{p}  (\cM_{\nu,\e,\beta} -\nu\zeta)^{-1}
\end{equation}
provided that the operator $\uno - \im \e (\cM_{\nu,\e,\beta} - \nu \zeta)^{-1}\cR_{\e,\beta}$ is invertible and the corresponding Neumann series converges. The following lemma justifies the arguments.
\begin{Lemma}[Resolvent bounds for $\cL_{\nu,\e,\beta} $]\label{lem:inverseMR}
Let $j\in \NN$, $ \zeta \in \Gamma_j$,  $\e,\nu>0$ be such that
\begin{equation}\label{cond-temp}
\e < \frac{\beta}{\nu},
\end{equation}
and
\begin{equation}\label{smallcond}
\frac{\e}{\nu} (\|U\|_{C^2}+\beta) < \frac{1}{8\pi}.
\end{equation}
Then,
\begin{equation}\label{condition-uniform}
\| \im \e (\cM_{\nu,\e,\beta} -\nu\zeta)^{-1} \cR_{\e,\beta}\|_{\cB(L^2,L^2)} < \frac{3\e}{\nu}(\|U\|_{C^2}+\beta) .
\end{equation}
In particular, $\cL_{\nu,\e,\beta} -\nu\zeta$ is invertible with bounded inverse
\begin{equation}\label{estimate-inverseresovlentL}
    \|(\cL_{\nu,\e,\beta} -\nu\zeta)^{-1}\|_{\cB(L^2,L^2)} < 2 \|(\cM_{\nu,\e,\beta} -\nu\zeta)^{-1}\|_{\cB(L^2,L^2)} < \infty.
\end{equation}
\end{Lemma}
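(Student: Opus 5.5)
The plan is to bound the operator $\im\e(\cM_{\nu,\e,\beta}-\nu\zeta)^{-1}\cR_{\e,\beta}$ using the explicit resolvent formula \eqref{cMresolvent} of Lemma \ref{lem:ResolventcM}, and then to conclude with a Neumann series. Inserting \eqref{cMresolvent} gives
\[
\im\e(\cM_{\nu,\e,\beta}-\nu\zeta)^{-1}\cR_{\e,\beta}
= \frac{\im\e}{\nu}(\cD_\e-\zeta)^{-1}\cR_{\e,\beta}
+ \frac{u_{\e,\beta}(\zeta,\cdot)}{\nu^2(\zeta-\mu_0)}\,\Pi_0\cR_{\e,\beta}.
\]
The prefactor $\im\e$ cancels against the $\im\e$ in the denominator of the rank-one correction. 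I will estimate the two terms separately.

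\emph{First term.} By \eqref{estimate-D}, which is where the hypothesis \eqref{cond-temp} enters, we have $\|(\cD_\e-\zeta)^{-1}\|\le 2$ on $\Gamma_j$. Combined with \eqref{Bound-R}, this bounds the first term by $\frac{2\e}{\nu}(\|U\|_{C^2}+\beta)$.

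\emph{Second (rank-one) term.} Three inputs are needed.
\begin{enumerate}
\item By the cancellation \eqref{estimatePi0cR}, $\|\Pi_0\cR_{\e,\beta}\|_{\cB(L^2,\RR)}\le\e^2\|U\|_{L^2}$. This gains a factor $\e^2$ and removes the dangerous $1/\e$.
\item A lower bound $|\zeta-\mu_0|\ge\frac12$ for $\zeta\in\Gamma_j$. For $j=0$ this holds by construction, since $\Gamma_0$ is the circle of radius $\frac12$ about $\mu_0$. For $j\neq0$ I compare real parts: $\Re\mu_0=-\e^2$, while $\Re\zeta\le -(j^2+\e^2)+\frac12$. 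Hence $|\zeta-\mu_0|\ge j^2-\frac12\ge\frac12$.
\item A bound $\|u_{\e,\beta}(\zeta,\cdot)\|_{L^2}\le 2\|U''\|_{L^2}+\beta/|\zeta+\e^2|$, again from \eqref{estimate-D}.
\end{enumerate}
For the quantity $|\zeta+\e^2|$ I use that $-\e^2$ is the $j=0$ eigenvalue of $\cD_\e$. The bound \eqref{estimate-D} on $\Gamma_j$ (guaranteed by \eqref{cond-temp}) is equivalent to $\mathrm{dist}(\zeta,\sigma(\cD_\e))\ge\frac12$, so $|\zeta+\e^2|\ge\frac12$. Together these give, for the second term,
\[
\frac{2\e^2}{\nu^2}\|U\|_{L^2}\big(2\|U''\|_{L^2}+2\beta\big)\le \frac{4\e}{\nu}\|U\|_{C^2}\cdot\frac{\e}{\nu}(\|U\|_{C^2}+\beta).
\]
By \eqref{smallcond}, $\frac{\e}{\nu}(\|U\|_{C^2}+\beta)<\frac1{8\pi}$, so the right-hand side is at most $\frac{4}{8\pi}\cdot\frac{\e}{\nu}\|U\|_{C^2}<\frac{\e}{\nu}(\|U\|_{C^2}+\beta)$. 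Adding the two terms yields \eqref{condition-uniform}.

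\emph{Conclusion.} By \eqref{condition-uniform} and \eqref{smallcond}, the operator norm is less than $\frac{3}{8\pi}<\frac12$. The Neumann series for $(\uno-\im\e(\cM_{\nu,\e,\beta}-\nu\zeta)^{-1}\cR_{\e,\beta})^{-1}$ therefore converges, with norm at most $2$. The factorization \eqref{resolventcL1} then shows that $\cL_{\nu,\e,\beta}-\nu\zeta$ is invertible and gives \eqref{estimate-inverseresovlentL}.

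\emph{Main obstacle.} The delicate step is the uniform control of $|\zeta+\e^2|^{-1}$, and more generally of $(\cD_\e-\zeta)^{-1}$, on $\Gamma_0$. For $\beta\neq0$ this circle is centered at $\mu_0=-\e^2+\frac{\im\beta}{\e\nu}$ rather than at an eigenvalue of $\cD_\e$. The argument relies on \eqref{estimate-D}, which is exactly what \eqref{cond-temp} is invoked to provide, to keep $\Gamma_0$ at distance at least $\frac12$ from $-\e^2$. Should that distance bound fail to follow from \eqref{cond-temp} alone, I would instead track $\frac{\beta}{|\zeta+\e^2|}\lesssim\frac{\beta}{\beta/(\e\nu)-1/2}$ directly, which is harmless whenever $\frac{\beta}{\e\nu}\ge1$.
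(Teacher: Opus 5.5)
Your proposal is correct and follows the paper's proof essentially step by step. It uses the same splitting via \eqref{cMresolvent}, the bound $\|(\cD_\e-\zeta)^{-1}\|\le 2$ from \eqref{estimate-D} for the first term, the $\e^2$ gain from \eqref{estimatePi0cR} together with $|\zeta-\mu_0|\ge\frac12$ for the rank-one term, and then the Neumann series in \eqref{resolventcL1}. Your explicit checks of $|\zeta-\mu_0|\ge\frac12$ and $|\zeta+\e^2|\ge\frac12$ on every $\Gamma_j$ fill in details the paper leaves implicit; the latter does follow from \eqref{cond-temp}, since $\mathrm{Im}\,\zeta\ge\frac{\beta}{\e\nu}-\frac12>\frac12$ on $\Gamma_0$. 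You omit the $\sqrt{2\pi}$ factors coming from $\|1\|_{L^2}$ and from $\|U\|_{L^2},\|U''\|_{L^2}\le\sqrt{2\pi}\|U\|_{C^2}$, but these are exactly what the constant $\frac{1}{8\pi}$ in \eqref{smallcond} absorbs, so the stated bound still holds.
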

\begin{proof}
    One has $\zeta \in \rho_{L^2}(\cD_\e)$ and, by Lemma \ref{lem:ResolventcM},
\begin{equation}\label{auxiteration1}
        \im \e (\cM_{\nu,\e,\beta} -\nu\zeta)^{-1} \cR_{\e,\beta} = \frac{\im \e}{\nu} (\cD_\e -\zeta)^{-1} \cR_{\e,\beta}  +  \frac{u_{\e,\beta}(\zeta,y)}{\nu^2(\zeta - \mu_{0})}\Pi_0 \cR_{\e,\beta}.
    \end{equation}
For the first term, by \eqref{Bound-R} and \eqref{estimate-D} we have
\begin{equation}\label{lem:inverseMR-term-1}
    \|\frac{\im \e}{\nu} (\cD_\e -\zeta)^{-1} \cR_{\e,\beta}\|_{\cB(L^2,L^2)} \leq \frac{2\e}{\nu}(\|U\|_{C^2} + \beta).
\end{equation}
For the second term, noting that
\[\|u_{\e,\beta}(\zeta,y)\|_{L^2} = \|\big(\cD_{\e}-\zeta\big)^{-1}  (U''(y)-\beta)\|_{L^2} \leq 2\| U'' -\beta\|_{L^2} = 2\|U''\|_{L^2} + 2 \sqrt{2\pi} \beta,\]
and, by \eqref{estimatePi0cR},
\[
\| \Pi_0 \cR_{\e,\beta}\|_{\cB(L^2,\RR)} \leq \e^2 \|U\|_{L^2},\]
then, using that the rank-one operator $\frac{u_{\e,\beta}(\zeta,y)}{\nu^2(\zeta-\mu_0)}\Pi_0\cR_{\e,\beta}$ has operator norm equal to the product of the norms of its two factors, we get
\begin{equation}\label{lem:inverseMR-term-2}
  \|\frac{u_{\e,\beta}(\zeta,y)}{\nu^2(\zeta - \mu_{0})}\Pi_0 \cR_{\e,\beta}\|_{\cB(L^2,L^2)} \leq \frac{4\e^2}{\nu^2} (\|U''\|_{L^2} +  \sqrt{2\pi} \beta)\|U\|_{L^2}.
\end{equation}
Combining \eqref{lem:inverseMR-term-1} and \eqref{lem:inverseMR-term-2} and using the condition \eqref{smallcond}, then we obtain
\begin{align*}
  \| \im \e (\cM_{\nu,\e,\beta} -\nu\zeta)^{-1} \cR_{\e,\beta}\|_{\cB(L^2,L^2)} & \leq \frac{2\e}{\nu} \Big( \|U\|_{C^2} +\beta + \frac{2\e}{\nu} \|U\|_{L^2}(\|U''\|_{L^2}+ \sqrt{2\pi}\beta) \Big) \\
  & \leq \frac{2\e}{\nu} \Big( (1+ \frac{4 \pi\e}{\nu} \|U\|_{C^2})(\|U\|_{C^2}+\beta) \Big) \\
  & \leq \frac{3\e}{\nu}(\|U\|_{C^2}+\beta) ,
\end{align*}
which is the estimate \eqref{condition-uniform}. Then by the Neumann expansion \eqref{resolventcL2} and \eqref{smallcond}, we get
\begin{equation}\label{estimate-Neumanninverse}
  \|(  \uno-\im \e (\cM_{\nu,\e} -\nu\zeta)^{-1} \cR_\e)^{-1} \|_{\cB(L^2,L^2)} \leq \frac{1}{1-\frac{3 \e}{\nu}(\|U\|_{C^2}+\beta)} <2
\end{equation}
By the \eqref{resolventcL1}, \eqref{estimate-Neumanninverse} and Lemma \ref{lem:ResolventcM}, we have the bound \eqref{estimate-inverseresovlentL} and thus $\cL_{\nu,\e} -\nu\zeta$ is invertible with bounded inverse.
\end{proof}
Under assumptions \eqref{cond-temp}--\eqref{smallcond}, the Riesz projection associated with $\cL_{\nu,\e,\beta}$ is well defined. Our next goal is to construct an isomorphism between the ranges of $P_{\nu,\e,\beta}^{(j)}$ and $Q_{\nu,\e,\beta}^{(j)}$. In classical perturbation theory, this is typically achieved by assuming that $\|(P-Q)^2\|<1$, or more generally that the spectral radius of $(P-Q)^2$ is smaller than 1. However, as observed in \cite{Colombo}, the difference $P_{\nu,\e,\beta}^{(j)}-Q_{\nu,\e,\beta}^{(j)}$ is close to a rank-one projection, whose norm is generally not small. Consequently, the standard perturbative argument cannot be applied directly. Instead, we establish the following decomposition, which is analogous to Lemmas~2.7 and~2.9 in \cite{Colombo}. The presence of the parameter $\beta$ introduces an additional complication: the zeroth Fourier mode of the ``large'' component $W_{\nu,\e,\beta}^{(j)}$ is no longer zero. As a result, the corresponding estimates must be refined accordingly.
\begin{Lemma}\label{lem:P-Q}
Assuming \eqref{cond-temp}--\eqref{smallcond}, then there exist constants $\delta_{U} >0$ and $C_{U} >0$ depending on $\|U \|_{C^2}$ but independent of $\e$, $\nu$, $j$ and $\beta$ such that if $\e(1+\beta) / \nu <\delta_{U}$, $j\in\NN$, then the Riesz projections $Q_{\nu,\e,\beta}^{(j)}$ and $ P_{\nu,\e,\beta}^{(j)}$ satisfy
\begin{equation}
    P_{\nu,\e,\beta}^{(j)} - Q_{\nu,\e,\beta}^{(j)} = \cT_{\nu,\e,\beta}^{(j)}  + W_{\nu,\e,\beta}^{(j)} \Pi_0,
\end{equation}
with the following bounds
\begin{align}
\label{bd:Pi0F1F2}
      &\frac{\nu}{\e(1+\beta)}\|\cT_{\nu,\e,\beta}^{(j)}\|_{\cB(L^2,L^2)} + \frac{\nu}{\e^3}\|\Pi_0 \cT_{\nu,\e,\beta}^{(j)}\|_{\cB(L^2,\RR)}\nonumber\\
      &\quad + \frac{\nu^2}{1+\beta}\|W_{\nu,\e,\beta}^{(j)}\|_{L^2} + \frac{\nu^2}{\e^2(1+\beta)}|\Pi_0 W_{\nu,\e,\beta}^{(j)}| \leq C_{U}.
\end{align}
\end{Lemma}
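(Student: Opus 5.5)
Since both Riesz projections are taken over the same contour $\Gamma_j$, I would start from
\[
P^{(j)}_{\nu,\e,\beta}-Q^{(j)}_{\nu,\e,\beta}=-\frac{\nu}{2\pi\im}\oint_{\Gamma_j}\Big[(\cL_{\nu,\e,\beta}-\nu\zeta)^{-1}-(\cM_{\nu,\e,\beta}-\nu\zeta)^{-1}\Big]\drm\zeta .
\]
By \eqref{resolventcL2}, the integrand equals $\sum_{p\ge1}A(\zeta)^p\,(\cM_{\nu,\e,\beta}-\nu\zeta)^{-1}$, where $A(\zeta):=\im\e(\cM_{\nu,\e,\beta}-\nu\zeta)^{-1}\cR_{\e,\beta}$. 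Lemma~\ref{lem:ResolventcM} splits the resolvent as
\[
(\cM_{\nu,\e,\beta}-\nu\zeta)^{-1}=\tfrac1\nu(\cD_\e-\zeta)^{-1}+\frac{u_{\e,\beta}(\zeta,\cdot)}{\im\e\nu^2(\zeta-\mu_0)}\Pi_0 .
\]
I would define $\cT^{(j)}_{\nu,\e,\beta}$ as the contour integral of $\sum_{p\ge1}A^p\,\tfrac1\nu(\cD_\e-\zeta)^{-1}$. The remaining piece ends in $\Pi_0$, so I would set
\[
W^{(j)}_{\nu,\e,\beta}:=-\frac{\nu}{2\pi\im}\oint_{\Gamma_j}\sum_{p\ge1}A(\zeta)^p\,\frac{u_{\e,\beta}(\zeta,\cdot)}{\im\e\nu^2(\zeta-\mu_0)}\,\drm\zeta\in L^2 .
\]
This is the decomposition claimed in the lemma. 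The contour $\Gamma_j$ has length $\pi$, and on it \eqref{estimate-D} holds, $|\zeta-\mu_0|\ge\tfrac12$ (when $j=0$; for $j\ne0$ this relies on \eqref{cond-temp} keeping $\mu_0$ away from $\Gamma_j$), and $\|A\|\le 3\e(\|U\|_{C^2}+\beta)/\nu<\tfrac12$ by Lemma~\ref{lem:inverseMR}.

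\textbf{Bound on $\cT$.} Summing the geometric series gives $\|\cT\|\lesssim \nu\cdot\tfrac1\nu\cdot\|A\|\lesssim \e(1+\beta)/\nu$, as required.

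\textbf{Bound on $W$.} This is the delicate term, because of the prefactor $1/\e$. I would use that the $p$-th term begins, on the right, with $A\,u_{\e,\beta}$, and split $A$ via \eqref{auxiteration1}. The first piece, $\tfrac{\im\e}{\nu}(\cD_\e-\zeta)^{-1}\cR_{\e,\beta}u_{\e,\beta}$, produces a factor $\e$ that cancels the $1/\e$. The second piece carries $\Pi_0\cR_{\e,\beta}u_{\e,\beta}=O(\e^2)$ by \eqref{estimatePi0cR}, which does even better. Together these give $\|W\|\lesssim \nu\cdot\tfrac{1}{\e\nu^2}\cdot\tfrac{\e}{\nu}(1+\beta)\|u_{\e,\beta}\|$. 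Since $\|u_{\e,\beta}\|\lesssim 1+\beta$, this yields a bound of order $(1+\beta)^2/\nu^2$. To recover the sharper bound $(1+\beta)/\nu^2$, I would use the decomposition $u_{\e,\beta}=u_\e+\beta/(\e^2+\zeta)$ and the fact that $\cR_{\e,\beta}1=U$ is independent of $\beta$ (the $\Pi_{\ne}$ kills the constant). Then only $u_\e$ and $U$ enter at first order, and the extra $\beta$ appears only through $\|A\|$. Any surplus powers of $\e(1+\beta)/\nu$ are absorbed by the smallness assumption $\e(1+\beta)/\nu<\delta_U$.

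\textbf{Zero modes.} For $\Pi_0\cT$ and $\Pi_0W$, I would apply $\Pi_0$ on the left. Note that $\Pi_0(\cD_\e-\zeta)^{-1}=-(\e^2+\zeta)^{-1}\Pi_0$ commutes through the resolvent, and $\Pi_0u_{\e,\beta}=-\beta/(\e^2+\zeta)$. The left-most factor $A$ then contributes $\Pi_0A=\frac{\im\e}{\nu}\Big(\!-\tfrac{1}{\e^2+\zeta}+\tfrac{\Pi_0u_{\e,\beta}}{\im\e\nu(\zeta-\mu_0)}\Big)\Pi_0\cR_{\e,\beta}$, which is controlled by $\e^2$ via \eqref{estimatePi0cR}. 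This should give $\|\Pi_0\cT\|\lesssim\e^3/\nu$ and $|\Pi_0W|\lesssim\e^2(1+\beta)/\nu^2$.

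\textbf{Main obstacle.} The hardest part is the $\beta$-dependent term $\Pi_0u_{\e,\beta}/(\im\e\nu(\zeta-\mu_0))$. It is of size $\beta/(\e\nu)$, which is not small, and it must be tracked precisely. For $j\ne0$, the residue calculus could help: the function $\zeta\mapsto(\e^2+\zeta)^{-1}$ is analytic inside $\Gamma_j$. For $j=0$, I would exploit \eqref{cond-temp}: it gives $|\mu_0+\e^2|=\beta/(\e\nu)>1$, so $|\e^2+\zeta|\ge\tfrac12$ on $\Gamma_0$, and the factor $\beta/(\e^2+\zeta)$ can be paired with $1/(\e\nu(\zeta-\mu_0))$. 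I expect this bookkeeping to be where the constants, and the precise powers of $(1+\beta)$, are decided.
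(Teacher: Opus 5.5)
Your splitting is the paper's: your $\cT^{(j)}_{\nu,\e,\beta}$ and $W^{(j)}_{\nu,\e,\beta}$ coincide with its, and the $\|\cT\|$ bound is identical. The gap is exactly the step you flag. Both zero-mode bounds need $\|\Pi_0A(\zeta)\|_{\cB(L^2,\RR)}\lesssim\e^3/\nu$ uniformly in $\beta$, and the proposal does not prove it.

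First, there is a sign error: $\Pi_0u_{\e,\beta}=+\beta/(\e^2+\zeta)$, not $-\beta/(\e^2+\zeta)$ (see Lemma~\ref{lem:ResolventcM}). Second, the lower bounds you suggest are too weak. With only $|\e^2+\zeta|\ge\tfrac12$ and $|\zeta-\mu_0|\ge\tfrac12$, the $\beta$-part $\frac{\beta}{\nu^2(\zeta-\mu_0)(\e^2+\zeta)}\Pi_0\cR_{\e,\beta}$ is only $\lesssim\beta\e^2\|U\|_{L^2}/\nu^2$. This misses the target by the factor $\beta/(\e\nu)$, which exceeds $1$ by \eqref{cond-temp} and is unbounded under the hypotheses (fix $\beta,\nu$ and let $\e\to0$). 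Residue calculus does not help for $j\neq0$ either. The full integrand contains $(\uno-A)^{-1}$, $(\cD_\e-\zeta)^{-1}$ and $(\cM_{\nu,\e,\beta}-\nu\zeta)^{-1}$, all of which have poles inside $\Gamma_j$.

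The missing idea is an exact cancellation, which the paper uses for \eqref{estimate-Pi0MR}. Since $\zeta-\mu_0=(\zeta+\e^2)-\frac{\im\beta}{\e\nu}$,
\[
-\frac{1}{\e^2+\zeta}+\frac{\beta}{\im\e\nu(\zeta-\mu_0)(\e^2+\zeta)}=-\frac{1}{\zeta-\mu_0},
\qquad\text{hence}\qquad
\Pi_0A(\zeta)=-\frac{\im\e}{\nu(\zeta-\mu_0)}\,\Pi_0\cR_{\e,\beta},
\]
and therefore $\|\Pi_0A\|\le 2\e^3\|U\|_{L^2}/\nu$ on every $\Gamma_j$. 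Alternatively, you can use the full strength of \eqref{cond-temp}: $|\zeta+\e^2|\ge\beta/(2\e\nu)$ on $\Gamma_0$, and $|\zeta-\mu_0|\ge\beta/(2\e\nu)$ on $\Gamma_j$ for $j\neq0$.

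Once this is in place, your factorization $\Pi_0\sum_{p\ge1}A^p=(\Pi_0A)(\uno-A)^{-1}$ gives $\|\Pi_0\cT\|\lesssim\e^3/\nu$ and $|\Pi_0W|\lesssim\e^2(1+\beta)/\nu^2$ at once. This is cleaner than the paper's route. The paper splits $W=W_1+\dots+W_4$, which separates the two halves of $\Pi_0A$, and then needs an explicit residue computation of $\Pi_0W_3$.

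Your treatment of $\|W\|$ is sound and actually sharper than the paper's. Using $\cR_{\e,\beta}1=U$, $u_{\e,\beta}=u_\e+\beta/(\e^2+\zeta)$, and $|\beta/(\e^2+\zeta)|\le2\beta$ on $\Gamma_j$ gives $\|\cR_{\e,\beta}u_{\e,\beta}\|\lesssim1+\beta$. Hence you get the stated bound $\|W\|\lesssim(1+\beta)/\nu^2$. The paper's recorded estimate $\|W_1\|\lesssim(\|U\|_{C^2}+\beta)^2/\nu^2$ only yields $(1+\beta)^2/\nu^2$.
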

\begin{proof}
By the definitions \eqref{auxiliaryRieszproj} and \eqref{Rieszproj}, since $\cL_{\nu,\e,\beta}=\cM_{\nu,\e,\beta}-\im \e \cR_{\e,\beta}$, we observe that
\begin{align*}
    &P_{\nu,\e,\beta}^{(j)} - Q_{\nu,\e,\beta}^{(j)} \\
    &= -\frac{\nu}{2\pi\im} \oint_{\Gamma_j} \Big((\cL_{\nu,\e,\beta} -\nu\zeta)^{-1} -(\cM_{\nu,\e,\beta} -\nu\zeta)^{-1} \Big)\drm \zeta \\
    &= -\frac{\im \nu \e}{2\pi\im} \oint_{\Gamma_j} (\cL_{\nu,\e,\beta} -\nu\zeta)^{-1} \cR_{\e,\beta}  (\cM_{\nu,\e,\beta} -\nu\zeta)^{-1} \drm \zeta \\
    & =-\frac{\im \nu \e}{2\pi\im} \oint_{\Gamma_j}  \big( \uno -  \im \e  (\cM_{\nu,\e,\beta} -\nu\zeta)^{-1} \cR_{\e,\beta}\big)^{-1}  (\cM_{\nu,\e,\beta} -\nu\zeta)^{-1}  \cR_{\e,\beta}  (\cM_{\nu,\e,\beta} -\nu\zeta)^{-1}\drm \zeta \\
    &= -\frac{1}{2 \pi\im} \oint_{\Gamma_j}  \big( \uno -  \im \e  (\cM_{\nu,\e,\beta} -\nu\zeta)^{-1} \cR_{\e,\beta}\big)^{-1} ( \im \e(\cM_{\nu,\e,\beta} -\nu\zeta)^{-1}  \cR_{\e,\beta})  (\cD_\e -\zeta)^{-1} \drm \zeta \\
    &\quad - \frac{1}{2\nu\pi\im} \oint_{\Gamma_j}  \frac{\big( \uno -  \im \e  (\cM_{\nu,\e,\beta} -\nu\zeta)^{-1}\cR_{\e,\beta}\big)^{-1}}{\zeta-\mu_0}   (\cM_{\nu,\e,\beta} -\nu\zeta)^{-1}  \cR_{\e,\beta} \circ u_{\e,\beta}(\zeta,y)  \Pi_0 \drm \zeta \\
    &=: \cT_{\nu,\e,\beta}^{(j)}  + W_{\nu,\e,\beta}^{(j)} \Pi_0 .
\end{align*}
Now we will estimate $\cT_{\nu,\e,\beta}^{(j)}$ and $W_{\nu,\e,\beta}^{(j)}$. By \eqref{estimate-D}, \eqref{condition-uniform} and \eqref{estimate-Neumanninverse}, we have the estimate
\begin{equation}\label{estimate-T}
   \|\cT_{\nu,\e,\beta}^{(j)}\|_{\cB(L^2,L^2)} \leq \frac{\frac{3\e}{\nu}(\|U\|_{C^2}+\beta)}{1-\frac{3\e}{\nu}(\|U\|_{C^2}+\beta)} <   \frac{6\e}{\nu}(\|U\|_{C^2}+\beta).
\end{equation}
On the other hand, since $\Pi_0 u_{\e} = 0$, then by \eqref{auxiteration1} we know that
\[\Pi_0 ( \im \e(\cM_{\nu,\e,\beta} -\nu\zeta)^{-1}  \cR_{\e,\beta}) = \big(-\frac{\im \e}{\nu (\zeta +\e^2)}  + \frac{\beta}{\nu^2 (\zeta+\e^2)(\zeta - \mu_0)} \big) \Pi_0 \cR_{\e,\beta} =  -\frac{\im \e}{\nu(\zeta - \mu_0)} \Pi_0 \cR_{\e,\beta}.\]
By \eqref{estimatePi0cR} and the definition of $\Gamma_j$, we get
\begin{equation}\label{estimate-Pi0MR}
  \|\Pi_0 ( \im \e(\cM_{\nu,\e,\beta} -\nu\zeta)^{-1}  \cR_{\e,\beta}) \|_{\cB(L^2,\RR)} \leq \frac{2 \e^3}{\nu} \|U\|_{C^2}.
\end{equation}
Thus by \eqref{estimate-D}, \eqref{estimate-Neumanninverse} and \eqref{estimate-Pi0MR}, we have
\begin{equation}\label{estimate-Pi0T}
  \|\Pi_0 \cT_{\nu,\e,\beta}^{(j)}\|_{\cB(L^2,\RR)} \leq \frac{16 \e^3}{\nu} \|U\|_{C^2}.
\end{equation}

To estimate $ W_{\nu,\e,\beta}^{(j)}$, we first use the Neumann series to isolate the perturbative part
\begin{equation}
   \big(\uno -  \im \e  (\cM_{\nu,\e,\beta} -\nu\zeta)^{-1} \cR_{\e,\beta}\big)^{-1}= \uno+ \sum_{p=1}^{\infty} \big( \im \e  (\cM_{\nu,\e,\beta} -\nu\zeta)^{-1} \cR_{\e,\beta}\big)^{p} \eqqcolon \uno +\cA_1.
\end{equation}
Similar to \eqref{estimate-T} and \eqref{estimate-Pi0T}, the following estimates hold under the assumption \eqref{cond-temp}:
\begin{equation}\label{firstpiecebd}
    \|\cA_1\|_{\cB(L^2,L^2)} <  \frac{6\e}{\nu}(\|U\|_{C^2}+\beta), \quad \|\Pi_0\cA_1\|_{\cB(L^2,\RR)}< \frac{16\e^3}{\nu} \|U\|_{C^2} .
\end{equation}
And by \eqref{cMresolvent} we split
\begin{equation}
    (\cM_{\nu,\e,\beta} -\nu\zeta)^{-1}  \cR_{\e,\beta}  =\frac1\nu \big(\cD_{\e}-\zeta\big)^{-1}\cR_{\e,\beta}   +\frac{u_{\e,\beta}(\zeta,y)}{\im \e \nu^2 (\zeta-\mu_0)} \Pi_0 \cR_{\e,\beta} := \frac1\nu \big(\cD_{\e}-\zeta\big)^{-1}\cR_{\e,\beta}  + \cA_2,
\end{equation}
and by \eqref{lem:inverseMR-term-2} we know that
\begin{equation}\label{secondpiecebd}
     \| \cA_2\|_{\cB(L^2,L^2)}  \leq \frac{4\e}{\nu^2} (\|U''\|_{L^2} +  \sqrt{2\pi} \beta)\|U\|_{L^2}.
\end{equation}
Then we have
\begin{align*}
W_{\nu,\e,\beta}^{(j)} & = - \frac{1}{2\nu\pi\im} \oint_{\Gamma_j}  \frac{\uno + \cA_1 }{\zeta-\mu_0}  \big(\frac1\nu \big(\cD_{\e}-\zeta\big)^{-1}\cR_{\e,\beta}  + \cA_2 \big) \circ u_{\e,\beta}(\zeta,y) \drm \zeta \\
&= \frac{1}{2 \nu^2 \pi \im} \oint_{\Gamma_j}  \frac{\big(\cD_{\e}-\zeta\big)^{-1}\cR_{\e,\beta}}{\zeta -\mu_0}  u_{\e,\beta}(\zeta,y)  \drm \zeta \\
&\quad - \frac{1}{2 \nu^2 \pi \im} \oint_{\Gamma_j}  \frac{\cA_1  \big(\cD_{\e}-\zeta\big)^{-1}\cR_{\e,\beta}}{\zeta -\mu_0}  u_{\e,\beta}(\zeta,y)  \drm \zeta \\
&\quad - \frac{1}{2 \nu \pi \im} \oint_{\Gamma_j}  \frac{\cA_2 }{\zeta -\mu_0}  u_{\e,\beta}(\zeta,y)  \drm \zeta - \frac{1}{2 \nu \pi \im} \oint_{\Gamma_j}  \frac{\cA_1 \cA_2}{\zeta -\mu_0} u_{\e,\beta}(\zeta,y) \drm \zeta \\
&:= W_1 + W_2 + W_3 + W_4.
\end{align*}
By \eqref{Bound-R}, \eqref{estimate-D}, \eqref{firstpiecebd}, \eqref{secondpiecebd}, and the definition of $\Gamma_j$, we deduce that
\begin{align*}
  \|W_1\|_{L^2} \lesssim \frac{1}{\nu^2} (\|U\|_{C^2}+\beta)^2, &\quad \|W_2\|_{L^2} \lesssim \frac{\e}{\nu^3} (\|U\|_{C^2}+\beta)^3, \\
  \|W_3\|_{L^2}  \lesssim \frac{\e}{\nu^3}\|U\|_{L^2} (\|U\|_{C^2}+\beta)^2 , &\quad \|W_4\|_{L^2} \lesssim  \frac{\e^2}{\nu^4} \|U\|_{L^2} (\|U\|_{C^2}+\beta)^3.
\end{align*}
On the other hand, the projection of $W_{\nu,\e,\beta}^{(j)}$ is given by
\begin{align*}
\Pi_0  W_{\nu,\e,\beta}^{(j)} & = \Pi_0 W_1 + \Pi_0 W_2 + \Pi_0 W_3 + \Pi_0 W_4 \\
&= \frac{1}{2 \nu^2 \pi \im} \oint_{\Gamma_j}  \frac{\Pi_0 \cR_{\e,\beta}}{(\zeta -\mu_0)(\zeta+\e^2)} u_{\e,\beta}(\zeta,y)  \drm \zeta \\
&\quad - \frac{1}{2 \nu^2 \pi \im} \oint_{\Gamma_j}  \frac{\Pi_0  \cA_1  \big(\cD_{\e}-\zeta\big)^{-1}\cR_{\e,\beta}}{\zeta -\mu_0}  u_{\e,\beta}(\zeta,y)  \drm \zeta \\
&\quad - \frac{1}{2 \nu \pi \im} \oint_{\Gamma_j}  \frac{ \Pi_0  \cA_2}{\zeta -\mu_0}  u_{\e,\beta}(\zeta,y) \drm \zeta \\
&\quad - \frac{1}{2 \nu \pi \im} \oint_{\Gamma_j}  \frac{\Pi_0  \cA_1 \cA_2}{\zeta -\mu_0}  u_{\e,\beta}(\zeta,y) \drm \zeta.
\end{align*}
Applying \eqref{Bound-R}, \eqref{estimate-D}, \eqref{firstpiecebd}, \eqref{secondpiecebd} again and we obtain the following estimates
\[|\Pi_0 W_1| \lesssim \frac{\e^2}{\nu^2}\|U\|_{L^2}(\|U''\|_{L^2}+\beta),\]
\[|\Pi_0 W_2|  \lesssim \frac{\e^3}{\nu^3} \|U\|_{C^2} (\|U\|_{C^2} +\beta)^2, \]
\[|\Pi_0 W_4| \lesssim \frac{\e^4}{\nu^4} \|U\|_{C^2}^2 (\|U\|_{C^2} +\beta)^2.\]
For $W_3$, by \eqref{Cancelation} we have
\begin{align*}
  \Pi_0 W_3 & = -\frac{\beta }{2 \nu \pi \im} \oint_{\Gamma_j}  \frac{\Pi_0 \cR_{\e,\beta}}{\im \e \nu^2 (\zeta -\mu_0)^2 (\zeta+\e^2)}  u_{\e,\beta}(\zeta,y) \drm \zeta \\
  & =-\frac{\beta \e}{\nu^3 \im}  \frac{1}{2 \pi \im} \oint_{\Gamma_j}  \frac{1}{(\zeta -\mu_0)^2 (\zeta+\e^2)} \langle (\de_y +\e)^{-1} U, (\de_y +\e)^{-1} (\cD_\e - \zeta)^{-1} U''\rangle \drm \zeta \\
  & = -\frac{\beta \e}{\nu^3  \im}  \sum_{\ell\neq 0} \frac{\ell^2 |U_\ell|^2}{\ell^2+\e^2} \Big( \frac{1}{2\pi\im} \oint_{\Gamma_j} \frac{1}{(\zeta -\mu_0)^2 (\zeta+\e^2) (\zeta+\e^2+\ell^2)} \drm \zeta \Big)  \\
  & = \begin{cases}
        \frac{\beta \e}{\nu^3 \im} \frac{|U_j|^2+|U_{-j}|^2}{(j^2 + \e^2) \Big(j^2 +\frac{\im \beta}{\e\nu}\Big)^2} , & \mbox{if $j\neq 0$} \\
        -\frac{\e^3}{\beta \nu \im} \sum_{\ell \neq 0}\limits \frac{\ell^2 \Big(\ell^2 + \frac{2\im \beta}{\e \nu}\Big) |U_{\ell}|^2}{(\ell^2 + \e^2) \Big(\ell^2 + \frac{\im \beta}{\e \nu}\Big)^2}, & \mbox{if $j = 0$}.
      \end{cases}
\end{align*}
and thus we have a bound
\[
|\Pi_0 W_3|  \lesssim \frac{\e^2}{\nu^2} \|U''\|_{L^2}.
\]
Combining \eqref{estimate-T} and \eqref{estimate-Pi0T} and the above estimates for $W_i$ and $\Pi_0 W_i$, we obtain \eqref{bd:Pi0F1F2}.
\end{proof}


\section{Kato Reduction}\label{sec Kato}

This section is devoted to constructing an isomorphism between the ranges of $Q_{\nu,\e,\beta}^{(j)}$ and $P_{\nu,\e,\beta}^{(j)}$. We employ the transformation operator introduced in \cite{Colombo}:
\begin{equation}\label{Katomorphism}
    \cU_{\nu,\e,\beta}^{(j)} \coloneqq
    \uno - P_{\nu,\e,\beta}^{(j)} - Q_{\nu,\e,\beta}^{(j)} .
\end{equation}
It is readily verified that $\cU_{\nu,\e,\beta}^{(j)}$ maps $\mathrm{Ran}\, Q_{\nu,\e,\beta}^{(j)}$ onto $\mathrm{Ran}\, P_{\nu,\e,\beta}^{(j)}$ and conversely. Our objective is to
show that, under suitable assumptions, $\cU_{\nu,\e,\beta}^{(j)}$ provides an isomorphism between these two subspaces. To this end, we formally write the inverse of $\cU_{\nu,\e,\beta}^{(j)}$
as
\begin{equation}
    [\cU_{\nu,\e,\beta}^{(j)}]^{-1}
    =
    \Big( \uno -
    \big(P_{\nu,\e,\beta}^{(j)}-Q_{\nu,\e,\beta}^{(j)}\big)^2
    \Big)^{-1}
    \big(\uno-P_{\nu,\e,\beta}^{(j)}-Q_{\nu,\e,\beta}^{(j)}\big).
\end{equation}
Therefore, $\cU_{\nu,\e,\beta}^{(j)}$ is invertible whenever $\uno-\big(P_{\nu,\e,\beta}^{(j)}-Q_{\nu,\e,\beta}^{(j)}\big)^2$ is invertible. Furthermore, Lemma~\ref{lem:Spectrum-M} implies that the $j-$th eigenspaces of $\cM_{\nu,\e,\beta}$, namely $\mathrm{Ran}\,Q_{\nu,\e,\beta}^{(j)}$, are finite-dimensional. Hence, $\cU_{\nu,\e,\beta}^{(j)}$ defines an isomorphism between $\mathrm{Ran}\,Q_{\nu,\e,\beta}^{(j)}$ and $\mathrm{Ran}\,P_{\nu,\e,\beta}^{(j)}$. For sufficiently small $\nu^{-1}\e$, the invertibility of $\uno-\big(P_{\nu,\e,\beta}^{(j)}-Q_{\nu,\e,\beta}^{(j)}\big)^2$ follows from
the following lemma.
\begin{Lemma}\label{lem:KatoIso}
Assume \eqref{cond-temp} and \eqref{smallcond}. Then there exist constants $\delta_{U}>0$, depending on $\|U\|_{C^2}$ but independent of $\e$, $\nu$, $j$ and $\beta$, such that, whenever $\e(1+\beta) / \nu <\delta_{U}$, the operator
\[
\uno-\big(P_{\nu,\e,\beta}^{(j)}-Q_{\nu,\e,\beta}^{(j)}\big)^2,
\]
with $Q_{\nu,\e,\beta}^{(j)}$ and $P_{\nu,\e,\beta}^{(j)}$ defined in \eqref{auxiliaryRieszproj} and \eqref{Rieszproj}, respectively, is invertible for all $j\in\NN$. Therefore, the operator $\cU_{\nu,\e,\beta}^{(j)}$ defined in \eqref{Katomorphism} is an isomorphism.
\end{Lemma}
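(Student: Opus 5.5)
We plan to use the decomposition from Lemma~\ref{lem:P-Q}, $P^{(j)}_{\nu,\e,\beta}-Q^{(j)}_{\nu,\e,\beta} = \cT + W\Pi_0$, where we drop the superscripts and subscripts. Expanding the square gives
\[
(P-Q)^2 = \cT^2 + \cT W\Pi_0 + W\Pi_0\cT + W(\Pi_0 W)\Pi_0 .
\]
The difficulty is that $\|W\|_{L^2}$ is of order $(1+\beta)/\nu^2$, which need not be small, so $\|(P-Q)^2\|$ cannot be bounded by a small number directly. The scalar factors $\Pi_0$, however, should save us. We therefore estimate the four terms separately, using the bounds \eqref{bd:Pi0F1F2}.

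\textbf{Step 1: the small terms.} The first three terms are small:
\[
\|\cT^2\|\le C_U^2\,\e^2(1+\beta)^2/\nu^2, \qquad \|\cT W\Pi_0\|\le \|\cT\|\,\|W\|\le C_U^2\,\e(1+\beta)^2/\nu^3 .
\]
For the third term we use the scalar functional bound:
\[
\|W\Pi_0\cT\|\le \|W\|\,\|\Pi_0\cT\|_{\cB(L^2,\RR)}\le C_U^2\,\e^3(1+\beta)/\nu^3 .
\]

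\textbf{Step 2: the rank-one term.} The last term is a rank-one operator with norm
\[
\|W\|\,|\Pi_0W|\le C_U^2\,\e^2(1+\beta)^2/\nu^4 .
\]
In each bound, the powers of $\nu$ in the denominators exceed the powers of $\e$. The terms then have to be recast in the form $(\e(1+\beta)/\nu)^k$ times harmless factors.

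\textbf{Main obstacle.} The main obstacle is exactly this mismatch of powers. Terms like $\e(1+\beta)^2/\nu^3$ are small under $\e(1+\beta)/\nu<\delta_U$ only if $(1+\beta)/\nu^2$ is controlled. We would first try to sharpen the bound on the large part of $W$. The leading piece $W_1$ carries a factor $\Pi_0\cR_{\e,\beta}=O(\e^2)$ in every composite that appears, because $W$ always enters through $\cT W\Pi_0$ and $W\Pi_0\cT$. This suggests not estimating $\|W\|$ alone, but using the structure of the products instead.

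\textbf{Fallback.} If sharpening $W$ fails, we would avoid norm smallness and argue via the spectral radius. Write $(P-Q)^2 = A + B$, where $A=W(\Pi_0W)\Pi_0$ is rank one and $B$ collects the small terms with $\|B\|<1/2$. Then $\uno-A-B$ is invertible if and only if $\uno-A$ is invertible and $\|(\uno-A)^{-1}B\|<1$. Since $A$ is rank one, $\uno-A$ is invertible whenever $\Pi_0W\cdot\Pi_0W\ne1$. Indeed $(\Pi_0 W)^2 \lesssim \e^4(1+\beta)^2/\nu^4\le\delta_U^2\cdot\e^2/\nu^2$. The Sherman–Morrison formula (Lemma~\ref{lem:ShermanMorrisoninf}) then gives
\[
(\uno-A)^{-1}=\uno+A/(1-(\Pi_0W)^2),
\]
with $\|(\uno-A)^{-1}\|\le 1+2\|A\|$.

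\textbf{Conclusion.} Choosing $\delta_U$ small in terms of $C_U$, the product $\|(\uno-A)^{-1}\|\,\|B\|$ is less than $1$. This gives invertibility of $\uno-(P-Q)^2$ uniformly in $j$ and $\beta$. By the formula for $[\cU^{(j)}_{\nu,\e,\beta}]^{-1}$ and the finite dimensionality of the ranges, $\cU^{(j)}_{\nu,\e,\beta}$ is then an isomorphism.
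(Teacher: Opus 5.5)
Your fallback has a genuine gap, and it is the obstacle you identified but did not resolve. Since you fix $A=(\Pi_0W)\,W\Pi_0$, the remainder $B$ necessarily contains $\cT W\Pi_0$. The only bound available from \eqref{bd:Pi0F1F2} is $\|\cT\|\,\|W\|\lesssim \e(1+\beta)^2/\nu^3$, and this is not small under $\e(1+\beta)/\nu<\delta_U$. To see this, take $\beta=1$ and $\e=\delta_U\nu/4$: all hypotheses hold for small $\nu$, and the bound is $\sim\delta_U/\nu^2$. So $\|B\|<1/2$ is unjustified. The final step $\|(\uno-A)^{-1}\|\,\|B\|<1$ also fails, even for the genuinely small part of $B$. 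Your bound $\|(\uno-A)^{-1}\|\le 1+2\|A\|$ involves $\|A\|\lesssim \e^2(1+\beta)^2/\nu^4$, which in the same regime is $\sim\delta_U^2/\nu^2$, so no $\delta_U$ independent of $\nu$ makes the product small. Your ``if and only if'' is also only a sufficient condition. The proposed sharpening does not help either: in $\cT W\Pi_0$ the vector $W$ is acted on by $\cT$, not by $\Pi_0$, so no factor $\Pi_0\cR_{\e,\beta}=O(\e^2)$ appears.

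The missing idea is that invertibility must come from the \emph{scalar} Sherman--Morrison test, not from a norm bound on an inverse. The large vector $W$ should only ever be paired with the small functionals $\Pi_0$ or $\Pi_0\cT$. The paper factors $\uno-(P-Q)^2=(\uno-(P-Q))(\uno+(P-Q))$ and views each factor as $(\uno\pm\cT)\pm W\Pi_0$, a rank-one perturbation of the Neumann-invertible $\uno\pm\cT$. Lemma~\ref{lem:ShermanMorrisoninf} then only needs $1\pm\Pi_0(\uno\pm\cT)^{-1}W\neq0$, and
\[
|\Pi_0(\uno\pm\cT)^{-1}W|\le|\Pi_0W|+\|\Pi_0\cT\|_{\cB(L^2,\RR)}\,\|(\uno\pm\cT)^{-1}\|\,\|W\|\lesssim\frac{\e^2(1+\beta)}{\nu^2}+\frac{\e^3(1+\beta)}{\nu^3},
\]
which is small since $\e/\nu<\e(1+\beta)/\nu<\delta_U$. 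The resulting inverse may have norm of order $\|W\|$, which is harmless because only invertibility is claimed. Your expansion can be repaired the same way by regrouping: $(P-Q)^2=B'+\widetilde W\Pi_0$, with $B'=\cT^2+W\Pi_0\cT$ (genuinely $O(\delta_U^2)$) and $\widetilde W=(\cT+\Pi_0W)W$. Then apply Sherman--Morrison to $(\uno-B')-\widetilde W\Pi_0$, where the test scalar $\Pi_0(\uno-B')^{-1}\widetilde W$ is $O(\delta_U^3)$.
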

\begin{proof}
Since condition \eqref{smallcond} holds whenever $\frac{\e}{\nu}(1+\beta)<\delta_{U}$ for sufficiently small $\delta_{U}$, we may apply Lemma~\ref{lem:P-Q} to decompose
\begin{equation}\label{cT}
     P_{\nu,\e,\beta}^{(j)} - Q_{\nu,\e,\beta}^{(j)} = \cT_{\nu,\e,\beta}^{(j)}  + W_{\nu,\e,\beta}^{(j)} \Pi_0 .
\end{equation}
By \eqref{bd:Pi0F1F2}, $\cT^{(j)}_{\nu,\e,\beta}$ is uniformly bounded in $j,\e$, and $\nu$, and satisfies
\begin{equation}\label{boundIpmcT}
   \| \cT^{(j)}_{\nu,\e,\beta} \|_{\cB(L^2,L^2)} \leq  C_{U}\frac{\e}{\nu}(1+\beta) .
\end{equation}
Choosing $\delta_{U}$ sufficiently small so that $C_{U}\delta_{U} <1$, it follows that the operators $\uno \pm \cT_{\nu,\e,\beta}^{(j)}$ are invertible, with inverses given by a Neumann series and satisfying
\begin{equation}
    \| (\uno \pm  \cT_{\nu,\e,\beta}^{(j)})^{-1} \|_{\cB(L^2,L^2)} \leq \frac{1}{1-C_{U} \delta_{U}} .
\end{equation}
We now consider
\begin{equation}\label{prodottonotevole(A^2-B^2)}
    \uno - (P_{\nu,\e,\beta}^{(j)} - Q_{\nu,\e,\beta}^{(j)} )^2 = \big(  \uno - (P_{\nu,\e,\beta}^{(j)} - Q_{\nu,\e,\beta}^{(j)} )\big) \big(  \uno + (P_{\nu,\e,\beta}^{(j)} - Q_{\nu,\e,\beta}^{(j)} ) \big)\, .
\end{equation}
Using \eqref{cT}, each factor can be written as a rank-one update of an invertible operator:
\begin{equation}
       \uno \pm (P_{\nu,\e,\beta}^{(j)} - Q_{\nu,\e,\beta}^{(j)} ) = \uno \pm \cT_{\nu,\e,\beta}^{(j)}  \pm W_{\nu,\e,\beta}^{(j)} \Pi_0.
\end{equation}
We then apply  Lemma \ref{lem:ShermanMorrisoninf}, with
\[
    H=L^2(\TT)\,,\quad \cA = \uno \pm \cT_{\nu,\e,\beta}^{(j)}\, ,\quad f= \pm   W_{\nu,\e,\beta}^{(j)}\quad\text{and}\quad g=1 ,
\]
which yields
\begin{equation}\label{tojustify}
    \big( \uno \pm (P_{\nu,\e,\beta}^{(j)} - Q_{\nu,\e,\beta}^{(j)} ) \big)^{-1} \stackrel{\eqref{originalShermanMorrison}}{=}\big( \uno \pm \cT_{\nu,\e,\beta}^{(j)}) \big)^{-1} \mp \frac{ \big( \uno \pm \cT_{\nu,\e,\beta}^{(j)} \big)^{-1} W_{\e}^{(j)}(y) \Pi_0 \big( \uno \pm \cT_{\nu,\e,\beta}^{(j)} \big)^{-1}}{1\pm \Pi_0  \big( \uno \pm \cT_{\nu,\e,\beta}^{(j)} \big)^{-1} W_{\nu,\e,\beta}^{(j)}(y) }\, .
\end{equation}
To apply Lemma~\ref{lem:ShermanMorrisoninf}, we need to check
\[
\big|\Pi_0  \big( \uno \pm \cT_{\nu,\e,\beta}^{(j)} \big)^{-1}  W_{\nu,\e,\beta}^{(j)}(y) \big|\neq 1 .
\]
Using \eqref{bd:Pi0F1F2} together with \eqref{boundIpmcT}, we estimate
\begin{align*}
    &|\Pi_0  \big( \uno \pm \cT_{\nu,\e,\beta}^{(j)} \big)^{-1}  W_{\nu,\e,\beta}^{(j)}| \\
    & = |\Pi_0 W_{\nu,\e,\beta}^{(j)} +\sum_{n=1}^\infty (\mp 1)^n\Pi_0 (\cT_{\nu,\e,\beta}^{(j)})^n W_{\nu,\e,\beta}^{(j)}| \\
    &\leq |\Pi_0 W_{\nu,\e,\beta}^{(j)}| + \| \Pi_0\cT^{(j)}_{\nu,\e,\beta} \|_{\cB(L^2,\RR)} \| (\uno \pm  \cT_{\nu,\e,\beta}^{(j)})^{-1} \|_{\cB(L^2,L^2)} \|W_{\nu,\e,\beta}^{(j)}\|_{L^2} \\
    & \leq  C_{U} \cdot \frac{\e}{\nu}(1+\beta) \cdot \big(\frac{\e}{\nu} + \frac{\e^2}{\nu^2 (1-C_{U} \delta_{U})} \big).
\end{align*}
Hence, for $\delta_{U,\beta}$ sufficiently small, we obtain
\begin{align}
     |\Pi_0  \big( \uno \pm \cT_{\nu,\e,\beta}^{(j)} \big)^{-1} W_{\e}^{(j)}| < C_{U} \delta_{U} \big(\delta_{U} + \frac{\delta_{U}^2}{1-C_{U} \delta_{U}}\big) <1.
\end{align}
This justifies \eqref{tojustify}. Consequently, $\uno - (P_{\nu,\e,\beta}^{(j)} - Q_{\nu,\e,\beta}^{(j)})^2$ is invertible, and the operator $\cU_{\nu,\e,\beta}^{(j)}$ in \eqref{Katomorphism} is bounded with bounded inverse.
\end{proof}
We now obtain the final conclusion of Kato's method. In general, let $\e,\beta,\nu>0$ be in the regime where the operator $\cU_{\nu,\e,\beta}^{(j)}$ defined in \eqref{Katomorphism} is an isomorphism (in particular, this holds under the assumptions \eqref{cond-temp} and \eqref{smallcond}). We first note that the projections $Q_{\nu,\e,\beta}^{(j)}$ act as the identity on the corresponding eigenspaces of $\cM_{\nu,\e,\beta}$. Therefore, using the definition $\cU_{\nu,\e,\beta}^{(j)} =\uno-P_{\nu,\e,\beta}^{(j)}-Q_{\nu,\e,\beta}^{(j)}$, we obtain
\begin{equation}\label{Uproj-0}
    \cU_{\nu,\e,\beta}^{(0)}
    [F_{\nu,\e,\beta}(y)-\im\nu\e]
    =
    -P_{\nu,\e,\beta}^{(0)}
    [F_{\nu,\e,\beta}(y)-\im\nu\e] .
\end{equation}
Since the dimension of $\mathrm{Ran}\,P_{\nu,\e,\beta}^{(0)}$ is one, we may fix, once and for all, the choice of eigenvector representative
\begin{equation}\label{V-def-0}
    V_{\nu,\e,\beta}^{(0)}
    \coloneqq
    P_{\nu,\e,\beta}^{(0)}
    [F_{\nu,\e,\beta}(y)-\im\nu\e]
    =
    -\cU_{\nu,\e,\beta}^{(0)}
    [F_{\nu,\e,\beta}(y)-\im\nu\e],
\end{equation}
and similarly
\begin{equation}
    V_{\nu,\e,\beta}^{(\pm j)}
    \coloneqq
    P_{\nu,\e,\beta}^{(j)}e^{\pm \im j y}
    =
    -\cU_{\nu,\e,\beta}^{(j)} e^{\pm \im j y},
    \quad \forall j\in\ZZ^+.
\end{equation}
Let $\cV_{\nu,\e,\beta}^{(j)} \coloneqq \mathrm{Ran}\,P_{\nu,\e,\beta}^{(j)}$, $j\in\NN$. By the spectral decomposition associated with the Riesz projections, we have
\begin{equation}
  L^2(\TT)
  =
  \bigoplus_{j\in\NN}\cV_{\nu,\e,\beta}^{(j)}
\end{equation}
and
\begin{equation}
  \mathrm{dim}(\cV_{\nu,\e,\beta}^{(j)})
  =
  \mathrm{dim}(\mathrm{Ran}\,Q_{\nu,\e,\beta}^{(j)})
  =
  \begin{cases}
    1, & \mbox{if } j=0, \\
    2, & \mbox{if } j\neq 0.
  \end{cases}
\end{equation}
Since $\cL_{\nu,\e,\beta}$ commutes with the projections $P_{\nu,\e,\beta}^{(j)}$, each subspace $\cV_{\nu,\e,\beta}^{(j)}$ is invariant under $\cL_{\nu,\e,\beta}$. Consequently, for $j\in\ZZ^+$, $\cL_{\nu,\e,\beta}$ has either two distinct eigenvalues or a single eigenvalue of algebraic multiplicity two with associated eigenspace $\cV_{\nu,\e,\beta}^{(j)}$. For $j=0$, the operator $\cL_{\nu,\e,\beta}$ has a simple eigenvalue with associated eigenspace $\cV_{\nu,\e,\beta}^{(0)}$. In view of \eqref{resolventcL1} and \eqref{resolventcL2}, these eigenvalues exhaust the entire spectrum of $\cL_{\nu,\e,\beta}$ for fixed $\varepsilon,\nu>0$ and sufficiently small $\varepsilon$.

\section{The Expansion of \texorpdfstring{$\lambda_{\nu,\e,\beta}^{(0)}$}{lambda-nu-e-beta-0}}\label{sec expansion}

In this section, we analyze the stability of the eigenvalues of $\cL_{\nu,\e,\beta}$. Let $\lambda_{\nu,\e,\beta}^{(\pm j)}$ denote the eigenvalues associated with the eigenspaces $\cV_{\nu,\e,\beta}^{(j)}$. Since $\lambda_{\nu,\e,\beta}^{(\pm j)}$ are contained in the circles $\nu\Gamma_j$, we have
\begin{equation}
  |\lambda_{\nu,\e,\beta}^{(\pm j)} + \nu (j^2+\e^2)| < \frac{\nu}{2}, \quad j \in \NN.
\end{equation}
This estimate immediately implies the stability of $\lambda_{\nu,\e,\beta}^{(\pm j)}$ for $j\in\ZZ^+$. Therefore, it remains to determine the stability of the eigenvalue $\lambda_{\nu,\e,\beta}^{(0)}$.

Unlike the case $\beta=0$, where the corresponding eigenvalue may exhibit instability, we show that for fixed $\beta$ and $\nu$, the real part of $\lambda_{\nu,\e,\beta}^{(0)}$ remains negative when $\e$ is sufficiently small. To establish this result, we derive an asymptotic expansion for $\lambda_{\nu,\e,\beta}^{(0)}$.

Throughout the remainder of this section, since we only consider $\lambda_{\nu,\e,\beta}^{(0)}$, we suppress the superscript $(0)$ for simplicity of notation. In addition, all the following estimates are under the conditions \eqref{smallcond} and \eqref{condition-uniform}, namely
\[
\frac{\e}{\nu}(1+\beta) < \delta_{U} \quad \text{and} \quad \e < \frac{\beta}{\nu}.
\]

Before expanding $\lambda_{\nu,\e,\beta}$, we first record several estimates that will be used throughout the proof:
\begin{equation}\label{exp:D-epsbeta}
  \cD_{\nu,\e,\beta}^{-1}
  = \frac{\im \e \nu}{\beta}\big(\uno + \frac{\im \e \nu}{\beta} \partial_{yy}\big)^{-1}
  = \cO_{\rm op}\left(\frac{\e \nu}{\beta}\right),
\end{equation}
\begin{equation}\label{exp-F-order}
  F_{\nu,\e,\beta}
  = \cO_{\rm fun}\left(\frac{\e \nu}{\beta}\right),
\end{equation}
\begin{equation}\label{exp-UF}
  \langle (\de_y+\e)^{-1} U, (\de_y+\e)^{-1} F_{\nu,\e,\beta} \rangle
  = \cO \left(\frac{\e \nu}{\beta}\right),
\end{equation}
\begin{equation}\label{exp-numu0}
  \nu \mu_0
  = -\nu \e^2 + \frac{\im \beta}{\e} = \cO \left(\frac{\beta}{\e}\right),
\end{equation}
Here and below, we write $\cT=\cO_{\rm op}(\kappa)$ and $f=\cO_{\rm fun}(\kappa)$ if
\[
\|\cT\|_{\cB(L^2,L^2)}\leq C \kappa,
\qquad
\|f\|_{L^2}\leq C \kappa,
\]
where $C>0$ is independent of $\e$, $\nu$, and $\beta$. In addition, for $\Gamma_0$, the estimate \eqref{estimate-D} can be refined as
\begin{equation}\label{estimate-D-2}
 \| (\cD_\e - \zeta)^{-1} \|_{\cB(L^2,L^2)} < \frac{2\e\nu}{\beta}, \quad \forall \zeta \in \Gamma_0.
\end{equation}
Indeed, under the assumption \eqref{cond-temp}, since
$$(\cD_\e - \zeta)^{-1}: f = \sum_{j \in \ZZ} f_j e^{ijy} \mapsto \sum_{j \in \ZZ} \frac{f_j}{-j^2-\e^2-\zeta} e^{\im jy},$$ and noting that $$|-j^2-\e^2-\zeta| > \frac{\beta}{2\e\nu}$$ for each $j \in \ZZ$, then we conclude. Also, note that the conditions \eqref{cond-temp} and \eqref{smallcond} imply that $\e < 1$.\\

Now we look for the expansion of $\lambda_{\nu,\e,\beta}$. Recalling \eqref{Uproj-0} and \eqref{V-def-0}, we begin with the eigenvalue equation
\begin{equation}
\lambda_{\nu,\e,\beta}V_{\nu,\e,\beta}
=\cL_{\nu,\e,\beta}V_{\nu,\e,\beta}
=\cL_{\nu,\e,\beta}P_{\nu,\e,\beta}[F_{\nu,\e,\beta}(y)-\im \nu \e].
\end{equation}
Taking the $L^2(\TT)$ inner product with $V_{\nu,\e,\beta}$ yields
\begin{equation}\label{eq:trivlambda1}
\lambda_{\nu,\e,\beta}
=\frac{1}{\|V_{\nu,\e,\beta}\|^2_{L^2}}
\langle \cL_{\nu,\e,\beta}V_{\nu,\e,\beta},V_{\nu,\e,\beta}\rangle.
\end{equation}

To expand $\cL_{\nu,\e,\beta} V_{\nu,\e,\beta}$ and $V_{\nu,\e,\beta}$, we observe that
\begin{align*}
  \cL_{\nu,\e,\beta} P_{\nu,\e,\beta}
  &= -\frac{\nu}{2\pi\im} \oint_{\Gamma_0}
  (\cL_{\nu,\e,\beta}-\nu\zeta+\nu\zeta)
  (\cL_{\nu,\e,\beta}-\nu\zeta)^{-1}\drm\zeta \\
  &= -\frac{\nu^2}{2\pi\im} \oint_{\Gamma_0}
  \zeta(\cL_{\nu,\e,\beta}-\nu\zeta)^{-1}\drm\zeta\\
  &= -\frac{\nu^2}{2\pi\im} \oint_{\Gamma_0}
  \zeta
  \big(\uno-\im\e(\cM_{\nu,\e,\beta}-\nu\zeta)^{-1}\cR_{\e,\beta}\big)^{-1}
  (\cM_{\nu,\e,\beta}-\nu\zeta)^{-1}
  \drm\zeta.
\end{align*}
By Lemma~\ref{lem:Spectrum-M}, we have
\begin{equation}
    (\cM_{\nu,\e,\beta}-\nu\zeta)^{-1}
    [F_{\nu,\e,\beta}(y)-\im\nu\e]
    =-\frac{1}{\nu(\zeta-\mu_0)}
    [F_{\nu,\e,\beta}(y)-\im\nu\e],
\end{equation}
which yields
\begin{equation}
  \cL_{\nu,\e,\beta}V_{\nu,\e,\beta}
  =\frac{\nu}{2\pi\im}\oint_{\Gamma_0}
  \frac{\zeta}{\zeta-\mu_0}
  \big(\uno-\im\e(\cM_{\nu,\e,\beta}-\nu\zeta)^{-1}
  \cR_{\e,\beta}\big)^{-1}
  [F_{\nu,\e,\beta}(y)-\im\nu\e]
  \drm\zeta .
\end{equation}
We can expand $V_{\nu,\e,\beta}$ in the same way, and it turns out that the expansion of $V_{\nu,\e,\beta}$ is similar but without a factor $\nu \zeta$, namely
\begin{equation}
  V_{\nu,\e,\beta}
  =\frac{1}{2\pi\im}\oint_{\Gamma_0}
  \frac{1}{\zeta-\mu_0}
  \big(\uno-\im\e(\cM_{\nu,\e,\beta}-\nu\zeta)^{-1}
  \cR_{\e,\beta}\big)^{-1}
  [F_{\nu,\e,\beta}(y)-\im\nu\e]
  \drm\zeta .
\end{equation}
Using the Neumann series expansion together with Lemma~\ref{lem:ResolventcM}, we obtain
\begin{align*}
  &\big(\uno-\im\e(\cM_{\nu,\e,\beta}-\nu\zeta)^{-1}
  \cR_{\e,\beta}\big)^{-1} \\
  &\qquad=
  \sum_{p=0}^{\infty}
  \big(\im\e(\cM_{\nu,\e,\beta}-\nu\zeta)^{-1}
  \cR_{\e,\beta}\big)^p\\
  &\qquad=
  \sum_{p=0}^{\infty}
  \Big(
  \im\e(\nu\cD_{\e}-\nu\zeta)^{-1}\cR_{\e,\beta}
  +\frac{u_{\e,\beta}(\zeta,y)}
  {\nu^2(\zeta-\mu_0)}
  \Pi_0\cR_{\e,\beta}
  \Big)^p\\
  &\qquad\coloneqq
  \sum_{p=0}^{\infty}(\cT_1+\cT_2)^p .
\end{align*}

Before expanding $V_{\nu,\e,\beta}$ and $\cL_{\nu,\e,\beta}V_{\nu,\e,\beta}$, we estimate the orders of $\cT_1$ and $\cT_2$. Using the bounds \eqref{Bound-R} and \eqref{estimate-D-2}, we obtain
\begin{equation}\label{estimate-T1,2}
\cT_1 = \cO_{\rm op}\left(\frac{\e^2}{\beta}
(1+\beta)
\right), \quad
\cT_2 = \cO_{\rm op}\left(\frac{\e^3}{\beta \nu}
(1+\beta)
\right).
\end{equation}
Therefore, expanding the Neumann series up to second order gives
\begin{align*}
     V_{\nu,\e,\beta}
     &= \frac{1}{2\pi\im} \oint_{\Gamma_0}
     \frac{1}{\zeta-\mu_0}
     \big(\uno-\im\e(\cM_{\nu,\e,\beta}-\nu\zeta)^{-1}
     \cR_{\e,\beta}\big)^{-1}
     [F_{\nu,\e,\beta}(y)-\im\nu\e]\drm\zeta\\
     &= \frac{1}{2\pi\im}\oint_{\Gamma_0}
     \frac{1}{\zeta-\mu_0}
     [F_{\nu,\e,\beta}(y)-\im\nu\e]\drm\zeta\\
     &\quad+
     \frac{1}{2\pi\im}\oint_{\Gamma_0}
     \frac{1}{\zeta-\mu_0}
     \Big(
     \im\e(\nu\cD_\e-\nu\zeta)^{-1}\cR_{\e,\beta}
     +\frac{u_{\e,\beta}(\zeta,y)}
     {\nu^2(\zeta-\mu_0)}
     \Pi_0\cR_{\e,\beta}
     \Big)
     [F_{\nu,\e,\beta}(y)-\im\nu\e]\drm\zeta\\
     &\quad+
     \frac{1}{2\pi\im}\oint_{\Gamma_0}
     \frac{1}{\zeta-\mu_0}
     \Big(
     \im\e(\nu\cD_\e-\nu\zeta)^{-1}\cR_{\e,\beta}
     +\frac{u_{\e,\beta}(\zeta,y)}
     {\nu^2(\zeta-\mu_0)}
     \Pi_0\cR_{\e,\beta}
     \Big)^2
     [F_{\nu,\e,\beta}(y)-\im\nu\e]\drm\zeta\\
     &\quad+\cO_{\mathrm{fun}}\left(\frac{\e^7 \nu}{\beta^4}
     (1+\beta)^3
     \right)\\
     &\coloneqq V_0+V_1+V_2+V_r.
\end{align*}
The error term $V_r$ collects all terms containing three or more factors of $\cT_1+\cT_2$. The estimate $V_r = \cO_{\mathrm{fun}}\left(\frac{\e^7 \nu}{\beta^4}(1+\beta)^3\right)$ follows from \eqref{smallcond} and \eqref{estimate-T1,2}.\\

We can derive an analogous expansion for $\cL_{\nu,\e,\beta}V_{\nu,\e,\beta}$. The corresponding error term have the same structure as those appearing in the expansion of $V_{\nu,\e,\beta}$, but acquire an additional factor $\nu \zeta$. From \eqref{cond-temp}, we obtain
\begin{equation}\label{estimate-nuzeta}
|\nu \zeta| < \frac{3 \beta}{\e}.
\end{equation}
Consequently,
\begin{align*}
     &\cL_{\nu,\e,\beta} V_{\nu,\e,\beta}\\
     &= \frac{\nu}{2\pi\im} \oint_{\Gamma_0}
     \frac{\zeta}{\zeta-\mu_0}
     \Big(
     \sum_{p=0}^{\infty}(\cT_1+\cT_2)^p
     \Big)
     [F_{\nu,\e,\beta}(y)-\im\nu\e]\drm\zeta\\
     &= \frac{\nu}{2\pi\im} \oint_{\Gamma_0}
     \frac{\zeta}{\zeta-\mu_0}
     [F_{\nu,\e,\beta}(y)-\im\nu\e]\drm\zeta\\
     &\quad+
     \frac{\nu}{2\pi\im} \oint_{\Gamma_0}
     \frac{\zeta}{\zeta-\mu_0}
     \Big(
     \im\e(\nu\cD_{\e}-\nu\zeta)^{-1}\cR_{\e,\beta}
     +\frac{u_{\e,\beta}(\zeta,y)}
     {\nu^2(\zeta-\mu_0)}
     \Pi_0\cR_{\e,\beta}
     \Big)
     [F_{\nu,\e,\beta}(y)-\im\nu\e]\drm\zeta\\
     &\quad+
     \frac{\nu}{2\pi\im}\oint_{\Gamma_0}
     \frac{\zeta}{\zeta-\mu_0}
     \Big(
     \im\e(\nu\cD_{\e}-\nu\zeta)^{-1}\cR_{\e,\beta}
     +\frac{u_{\e,\beta}(\zeta,y)}
     {\nu^2(\zeta-\mu_0)}
     \Pi_0\cR_{\e,\beta}
     \Big)^2
     [F_{\nu,\e,\beta}(y)-\im\nu\e]\drm\zeta\\
     &\quad+\cO_{\mathrm{fun}}\left(\frac{\e^6 \nu}{\beta^3}
     (1+\beta)^3
     \right)\coloneqq v_0+v_1+v_2+v_r,
\end{align*}
The expansions of $V_{\nu,\e,\beta}$ and $\cL_{\nu,\e,\beta}V_{\nu,\e,\beta}$ differ only by the factor $\nu\zeta$ in the contour integrand. Hence, whenever the integrand has a simple pole at $\zeta=\mu_0$, the corresponding terms differ by the factor $\nu\mu_0$. However, this relation no longer holds for higher-order poles. This observation leads to the key cancellation in the expansion. We now compare the corresponding terms $V_i$ and $v_i$ in detail. For $V_0$ and $v_0$, we have
\begin{equation}
  V_0
  =\frac{1}{2\pi\im}\oint_{\Gamma_0}
  \frac{1}{\zeta-\mu_0}
  [F_{\nu,\e,\beta}(y)-\im\nu\e]\drm\zeta
  =F_{\nu,\e,\beta}-\im\nu\e,
\end{equation}
and
\begin{equation}
\begin{split}
     v_0
     &= \frac{\nu}{2\pi\im}\oint_{\Gamma_0}
     \frac{\zeta}{\zeta-\mu_0}
     [F_{\nu,\e,\beta}(y)-\im\nu\e]\drm\zeta
     =\nu\mu_0(F_{\nu,\e,\beta}-\im\nu\e)\\
     &=\left(-\nu\e^2+\frac{\beta}{\e}\im\right)
     (F_{\nu,\e,\beta}-\im\nu\e).
     \end{split}
\end{equation}
Clearly, $v_0 = \nu \mu_0 V_0$. Recalling \eqref{exp-F-order}, we have
$F_{\nu,\e,\beta}-\im\nu\e=\cO_{\rm fun}\big(\frac{\e \nu}{\beta}(1+\beta)
\big)$.\\

Next, we analyze the terms $V_1$ and $v_1$. We decompose $V_1$ into three components:
\begin{align*}
  V_1
  &=\frac{1}{2\pi\im}\oint_{\Gamma_0}
  \frac{\im\e}{\nu(\zeta-\mu_0)}
  (\cD_\e-\zeta)^{-1}
  \Pi_{\neq}\cR_{\e,\beta}
  [F_{\nu,\e,\beta}(y)-\im\nu\e]\drm\zeta\\
  &\quad+
  \frac{1}{2\pi\im}\oint_{\Gamma_0}
  \frac{\im\e}{\nu(\zeta-\mu_0)}
  (\cD_\e-\zeta)^{-1}
  \Pi_0\cR_{\e,\beta}
  [F_{\nu,\e,\beta}(y)-\im\nu\e]\drm\zeta\\
  &\quad+
  \frac{1}{2\pi\im}\oint_{\Gamma_0}
  \frac{u_{\e,\beta}(\zeta,y)}
  {\nu^2(\zeta-\mu_0)^2}
  \Pi_0\cR_{\e,\beta}
  [F_{\nu,\e,\beta}(y)-\im\nu\e]\drm\zeta\\
  &\coloneqq V_{1,2}+V_{1,2}+V_{1,3}.
\end{align*}
Similarly, we write
\begin{align*}
   v_1
   &= \frac{\nu}{2\pi\im}\oint_{\Gamma_0}
   \frac{\zeta}{\zeta-\mu_0}
   \Big(
   \im\e(\nu\cD_{\e}-\nu\zeta)^{-1}\cR_{\e,\beta}
   +\frac{u_{\e,\beta}(\zeta,y)}
   {\nu^2(\zeta-\mu_0)}
   \Pi_0\cR_{\e,\beta}
   \Big)
   [F_{\nu,\e,\beta}(y)-\im\nu\e]\drm\zeta\\
   &= \frac{1}{2\pi\im}\oint_{\Gamma_0}
   \frac{\im\e\zeta}{\zeta-\mu_0}
   (\cD_{\e}-\zeta)^{-1}
   \Pi_{\neq}\cR_{\e,\beta}
   [F_{\nu,\e,\beta}(y)-\im\nu\e]\drm\zeta\\
   &\quad+
   \frac{1}{2\pi\im}\oint_{\Gamma_0}
   \frac{\im\e\zeta}{\zeta-\mu_0}
   (\cD_{\e}-\zeta)^{-1}
   \Pi_0\cR_{\e,\beta}
   [F_{\nu,\e,\beta}(y)-\im\nu\e]\drm\zeta\\
   &\quad+
   \frac{1}{2\pi\im}\oint_{\Gamma_0}
   \frac{\zeta u_{\e,\beta}(\zeta,y)}
   {\nu(\zeta-\mu_0)^2}
   \Pi_0\cR_{\e,\beta}
   [F_{\nu,\e,\beta}(y)-\im\nu\e]\drm\zeta\\
   &\coloneqq v_{1,1}+v_{1,2}+v_{1,3}.
\end{align*}
The first two terms in each expansion, namely $V_{1,1}$, $V_{1,2}$, $v_{1,1}$, and $v_{1,2}$, contain only simple poles at $\zeta=\mu_0$. Hence, their contour integrals differ by the factor $\nu\mu_0$. The terms $V_{1,3}$ and $v_{1,3}$ contain second-order poles, and therefore require a separate computation. Using \eqref{Cancelation}, \eqref{exp-UF}, and the residue theorem, we obtain
\begin{equation}
  V_{1,1}
  =\frac{\im\e}{\nu}
  \cD_{\nu,\e,\beta}^{-1}
  \Pi_{\neq}\cR_{\e,\beta}
  [F_{\nu,\e,\beta}-\im\nu\e],
\end{equation}
\begin{equation}
  v_{1,1}
  =\im\e\mu_0\cD_{\nu,\e,\beta}^{-1}
  (\Pi_{\neq}\cR_{\e,\beta}
  [F_{\nu,\e,\beta}-\im\nu\e]),
\end{equation}
\begin{align*}
  V_{1,2} & = \frac{1}{2\pi\im} \oint_{\Gamma_0} \frac{ \im \e}{\nu(\zeta-\mu_0)}   \big(\cD_{\e}-\zeta\big)^{-1} \Pi_{0} \cR_{\e,\beta} [F_{\nu,\e,\beta}(y) -\im \nu \e] \drm \zeta \\
  & = \frac{\im \e^3}{\nu} \langle (\de_y+\e)^{-1} U, (\de_y+\e)^{-1} F_{\nu,\e,\beta} \rangle \frac{1}{2\pi\im} \oint_{\Gamma_0} \frac{ 1}{(\zeta-\mu_0)(\zeta+\e^2)}   \drm \zeta \\
  & = \frac{\e^4}{\beta} \langle (\de_y+\e)^{-1} U, (\de_y+\e)^{-1} F_{\nu,\e,\beta} \rangle ,
\end{align*}
\begin{align*}
  v_{1,2} & =  \frac{1}{2\pi\im} \oint_{\Gamma_0} \frac{ \im \e \zeta}{\zeta-\mu_0}   \big(\cD_{\e}-\zeta\big)^{-1} \Pi_{0} \cR_{\e,\beta} [F_{\nu,\e,\beta}(y) -\im \nu \e] \drm \zeta \\
  & = \im \e^3 \langle (\de_y+\e)^{-1} U, (\de_y+\e)^{-1} F_{\nu,\e,\beta} \rangle \frac{1}{2\pi\im} \oint_{\Gamma_0} \frac{\zeta}{(\zeta-\mu_0)(\zeta+\e^2)}   \drm \zeta \\
  & = \frac{\e^4 \nu }{\beta} \mu_0 \langle (\de_y+\e)^{-1} U, (\de_y+\e)^{-1} F_{\nu,\e,\beta} \rangle.
\end{align*}
Consequently, $v_{1,1} = \nu \mu_0 V_{1,1}$ and $v_{1,2} = \nu \mu_0 V_{1,2}$. For $V_{1,3}$ and $v_{1,3}$, noting that $u_\e(\mu_0,y)=F_{\nu,\e,\beta}(y)$ and $$\frac{\partial u_\e(\zeta,y)}{\partial\zeta}|_{\zeta=\mu_0} = \cD_{\nu,\e,\beta}^{-2}(U''),$$
 then the residue theorem yields
\begin{align*}
  V_{1,3}
  &=\frac{1}{2\pi\im}\oint_{\Gamma_0}
  \frac{u_{\e,\beta}(\zeta,y)}
  {\nu^2(\zeta-\mu_0)^2}
  \Pi_0\cR_{\e,\beta}
  [F_{\nu,\e,\beta}(y)-\im\nu\e]\drm\zeta\\
  &=\frac{\e^2}{\nu^2}
  \langle(\partial_y+\e)^{-1}U,
  (\partial_y+\e)^{-1}F_{\nu,\e,\beta}\rangle
  \frac{1}{2\pi\im}
  \oint_{\Gamma_0}
  \left(
  \frac{u_\e(\zeta,y)}{(\zeta-\mu_0)^2}
  +\frac{\beta}{(\zeta+\e^2)(\zeta-\mu_0)^2}
  \right)\drm\zeta\\
  &=\frac{\e^2}{\nu^2}
  \langle(\partial_y+\e)^{-1}U,
  (\partial_y+\e)^{-1}F_{\nu,\e,\beta}\rangle
  \left(
  \cD_{\nu,\e,\beta}^{-2}(U'')
  +\frac{\e^2\nu^2}{\beta}
  \right),
\end{align*}
\begin{align*}
v_{1,3}
&=\e^2
\langle(\partial_y+\e)^{-1}U,
(\partial_y+\e)^{-1}F_{\nu,\e,\beta}\rangle
\frac{1}{2\pi\nu\im}
\oint_{\Gamma_0}
\left(
\frac{\zeta u_\e(\zeta,y)}
{(\zeta-\mu_0)^2}
+\frac{\beta\zeta}
{(\zeta+\e^2)(\zeta-\mu_0)^2}
\right)\drm\zeta\\
&=
\frac{\e^2}{\nu}
\langle(\partial_y+\e)^{-1}U,
(\partial_y+\e)^{-1}F_{\nu,\e,\beta}\rangle
\left(
F_{\nu,\e,\beta}
-\im \nu \e
+\mu_0
\cD_{\nu,\e,\beta}^{-2}(U'')
+\mu_0 \frac{\e^2\nu^2}{\beta}
\right).
\end{align*}
The additional terms in $v_{1,3}$ arise from the higher-order pole. To isolate these contributions, we introduce the notation
\begin{equation}
  V_{1,3,1}
  \coloneqq \frac{\e^2}{\nu^2}
  \langle(\partial_y+\e)^{-1}U,
  (\partial_y+\e)^{-1}F_{\nu,\e,\beta}\rangle
  \cD_{\nu,\e,\beta}^{-2}(U''),
\end{equation}
\begin{equation}
  V_{1,3,2}
  \coloneqq \frac{\e^4}{\beta}\langle(\partial_y+\e)^{-1}U,
  (\partial_y+\e)^{-1}F_{\nu,\e,\beta}\rangle,
\end{equation}
\begin{equation}
  v_{1,3,1}
  \coloneqq \frac{\e^2}{\nu} \mu_0
  \langle(\partial_y+\e)^{-1}U,
  (\partial_y+\e)^{-1}F_{\nu,\e,\beta}\rangle
  \cD_{\nu,\e,\beta}^{-2}(U''),
\end{equation}
\begin{equation}
  v_{1,3,2}
  \coloneqq \frac{\e^4}{\beta} \mu_0 \langle(\partial_y+\e)^{-1}U,
  (\partial_y+\e)^{-1}F_{\nu,\e,\beta}\rangle,
\end{equation}
\begin{equation}
  v_{1,3,e}
  \coloneqq \frac{\e^2}{\nu} \langle(\partial_y+\e)^{-1}U,
  (\partial_y+\e)^{-1}F_{\nu,\e,\beta}\rangle \underbrace{(F_{\nu,\e,\beta} - \im \nu \e ) }_{= V_0}.
\end{equation}
Hence
\[
V_{1,3} = V_{1,3,1} + V_{1,3,2},
\]
while
\[
v_{1,3} = v_{1,3,1} + v_{1,3,2} + v_{1,3,e}.
\]
Notice that $v_{1,3,1} = \nu \mu_0 V_{1,3,1}$ and $v_{1,3,2} = \nu \mu_0 V_{1,3,2}$.

We now turn to $V_2$ and $v_2$. These terms require a more careful analysis, since the relation between the corresponding terms in the two expansions is affected by higher-order poles. We aim to isolate leading-order terms of $V_2$ and $v_2$ from the remaining error terms. Splitting
\[
\cR_{\e,\beta} = \Pi_{\neq} \cR_{\e,\beta} + \Pi_{0} \cR_{\e,\beta}.
\]
Recall that the component $\Pi_0\cR_{\e,\beta}$ contributes an additional factor of $\e^2$. Moreover, by \eqref{Cancelation}, \eqref{exp-numu0}, and \eqref{estimate-D-2}, each factor $(\cD_\e-\zeta)^{-1}$ appearing in the contour integrals over $\Gamma_0$ contributes a factor of $\e\nu/\beta$. The factor $\nu\zeta$ appearing in the numerator of the expansion for $\cL_{\nu,\e,\beta}V_{\nu,\e,\beta}$ may contribute an additional factor of $\beta/\e$, according to \eqref{estimate-nuzeta}. Therefore, the dominant contribution arises from the term containing $\big( \Pi_{\neq} \cT_1 \big)^2$. We write
\begin{align*}
  V_2
  &= \frac{1}{2\pi\im}
  \oint_{\Gamma_0}
  \big(\Pi_{\neq} \cT_1 \big)^2
  [F_{\nu,\e,\beta}(y)-\im\nu\e]\drm\zeta
  + V_{2,r} \\
  &\coloneqq V_{2,1} + V_{2,r},
\end{align*}
\begin{align*}
  v_2
  &= \frac{1}{2\pi\im}
  \oint_{\Gamma_0}
  \nu \zeta \big(\Pi_{\neq} \cT_1 \big)^2
  [F_{\nu,\e,\beta}(y)-\im\nu\e]\drm\zeta
  + v_{2,r} \\
  &\coloneqq v_{2,1} + v_{2,r}.
\end{align*}
Noting that $\Pi_{\neq}\cT_1$ does not change the order of the poles at $\zeta = \mu_0$, we conclude
\[
v_{2,1} = -\frac{\e^2}{\nu} \mu_0 \big( \cD_{\nu,\e,\beta}^{-1} \Pi_{\neq} \cR_{\e,\beta} \big)^2  [F_{\nu,\e,\beta}-\im \nu \e] = \nu \mu_0 V_{2,1}.
\]
For the calculations above, we can write $V_{\nu,\e,\beta}$ and $\cL_{\nu,\e,\beta}V_{\nu,\e,\beta}$ as
\begin{equation}\label{V-split}
  V_{\nu,\e,\beta} = (V_0 + \underbrace{V_{1,1} + V_{1,2} + V_{1,3,1} + V_{2,1}+ V_{1,3,2}}_{\coloneqq V_L}) + \underbrace{(V_{2,r} + V_r)}_{\coloneqq V_R},
\end{equation}
\begin{equation}\label{LV-split}
  \cL_{\nu,\e,\beta}V_{\nu,\e,\beta}
  =
  \Big(-\nu \e^2 + \frac{\im \beta}{\e}\Big) (V_0 + V_{1,1} + V_{1,2} + V_{1,3,1} + V_{2,1}+V_{1,3,2}) + v_{1,3,e} + \underbrace{(v_{2,r} + v_r)}_{\coloneqq v_R}.
\end{equation}

Next, we need to find the leading term of the inner product $\langle \cL_{\nu,\e,\beta}V_{\nu,\e,\beta},V_{\nu,\e,\beta}\rangle$, which determines the spectral stability. Note that the imaginary part of $\lambda_{\nu,\e,\beta}$ does not affect the stability, so we only focus on the real part of the inner product. From the assumptions \eqref{cond-temp}, \eqref{smallcond}, and the bounds \eqref{exp:D-epsbeta}-\eqref{exp-numu0}, \eqref{estimate-T1,2}, \eqref{estimate-nuzeta}, we can compute the orders of the following terms:
\begin{equation}\label{estimate-VL}
  V_L \coloneqq  V_{1,1} + V_{1,2} + V_{1,3,1} + V_{2,1} + V_{1,3,2}= \cO_{\mathrm{fun}}\Big(\frac{\e^3 \nu}{\beta^2}(1+\beta)^2\Big),
\end{equation}
\begin{equation}\label{estimate-VR}
  V_R \coloneqq  V_{2,r} + V_r = \cO_{\mathrm{fun}}\Big(\frac{\e^6}{\beta^3}(1+\beta)^4\Big),
\end{equation}
\begin{equation}
  v_R \coloneqq v_{2,r} + v_r = \cO_{\mathrm{fun}}\Big(\frac{\e^5}{\beta^2}(1+\beta)^4\Big).
\end{equation}
Note that $V_R \lesssim V_L \lesssim V_0$. Then in view of \eqref{V-split} and \eqref{LV-split}, we write the inner product of $\cL_{\nu,\e,\beta}V_{\nu,\e,\beta}$ and $V_{\nu,\e,\beta}$ as
\begin{align*}
  \langle \cL_{\nu,\e,\beta}V_{\nu,\e,\beta} , V_{\nu,\e,\beta}\rangle & = \nu \mu_0 \|V_0 + V_L\|_{L^2}^2 + \frac{\e^2}{\nu} \langle(\partial_y+\e)^{-1}U,
  (\partial_y+\e)^{-1}F_{\nu,\e,\beta}\rangle \|V_0\|_{L^2}^2 \\
  & \quad + \underbrace{\nu \mu_0 \langle V_0 + V_L , V_R \rangle}_{\cO\left(\frac{\e^6\nu}{\beta^3}(1+\beta)^5\right)} + \underbrace{\langle v_{1,3,e} , V_L + V_R \rangle}_{\cO\left(\frac{\e^7 \nu^2}{\beta^4}(1+\beta)^3\right)} + \underbrace{\langle v_R , V_0 + V_L +V_R \rangle}_{\cO\left(\frac{\e^6\nu}{\beta^3}(1+\beta)^5\right)} \\
  & = \Big(-\e^2 \nu + \frac{\im \beta}{\e}\Big) \|V_{\nu,\e,\beta}\|_{L^2}^2 + \frac{\e^2}{\nu} \langle(\partial_y+\e)^{-1}U,
  (\partial_y+\e)^{-1}F_{\nu,\e,\beta}\rangle \|V_{\nu,\e,\beta}\|_{L^2}^2 \\
  &\quad + \Big(-\e^2 \nu + \frac{\im \beta}{\e}\Big) (\|V_0 + V_L\|_{L^2}^2 - \|V_{\nu,\e,\beta}\|_{L^2}^2) \\
  &\quad + \frac{\e^2}{\nu} \langle(\partial_y+\e)^{-1}U,
  (\partial_y+\e)^{-1}F_{\nu,\e,\beta}\rangle (\|V_0\|_{L^2}^2 - \|V_{\nu,\e,\beta}\|_{L^2}^2) \\
  &\quad + \cO\Big(\frac{\e^6 \nu}{\beta^3}(1+\beta)^5\Big).
\end{align*}
Since the imaginary part of the expansion does not affect the spectral stability, we can ignore the imaginary parts of the first and the third terms. On the other hand, the orders of the real parts of the third and the fourth terms are given by:
\[
-\e^2 \nu (\|V_0 + V_L\|_{L^2}^2 - \|V_{\nu,\e,\beta}\|_{L^2}^2) = \cO\left(\frac{\e^9\nu^2}{\beta^4}(1+\beta)^5\right),
\]
\[
\quad \frac{\e^2}{\nu} \langle(\partial_y+\e)^{-1}U, (\partial_y+\e)^{-1}F_{\nu,\e,\beta}\rangle (\|V_0\|_{L^2}^2 - \|V_{\nu,\e,\beta}\|_{L^2}^2) = \cO\left(\frac{\e^7\nu^2}{\beta^4}(1+\beta)^3\right).
\]
Both terms can be absorbed into the remainder term. Therefore, we obtain
\begin{align}\label{exp-LV}
\Re \big(\langle \cL_{\nu,\e,\beta}V_{\nu,\e,\beta} , V_{\nu,\e,\beta}\rangle \big)&= \frac{\e^2}{\nu}\Big(-\nu^2  + \Re \big( \langle(\partial_y+\e)^{-1}U,
  (\partial_y+\e)^{-1}F_{\nu,\e,\beta}\rangle \big) \Big) \|V_{\nu,\e,\beta}\|_{L^2}^2 \nonumber\\
  &\quad + \cO \big(\frac{\e^6 \nu}{\beta^3}(1+\beta)^5\big).
\end{align}

It remains to estimate the normalization factor $\|V_{\nu,\e,\beta}\|_{L^2}$. By \eqref{cond-temp} and \eqref{estimate-VL}--\eqref{estimate-VR}, we have
\begin{align*}
  \|V_{\nu,\e,\beta}\|_{L^2}^2&= \|V_0\|_{L^2}^2 + \cO_{\mathrm{fun}}\Big(\frac{\e^4 \nu^2}{\beta^3}(1+\beta)^3\Big)\nonumber\\
  &= \frac{\e^2 \nu^2}{\beta^2}(1+\beta)^2 \left(\frac{\beta^2}{\e^2 \nu^2}(1+\beta)^2\|V_0\|_{L^2}^2 + \cO\big(\frac{\e}{\nu}(1+\beta)\big)\right).
\end{align*}
Note that
\[
\frac{\beta^2}{\e^2 \nu^2 (1+\beta)^2} \|V_0\|_{L^2}^2 = \|\big(\uno + \frac{\im \e \nu}{\beta} \partial_{yy}\big)^{-1} \big(\frac{U'' - \beta}{1+\beta}\big)\|_{L^2}^2
\]
has the lower bound, so $\|V_{\nu,\e,\beta}\|$ also has the lower bound provided $\e(1+\beta) /\nu$ is small enough. Thus
\begin{equation}\label{estimate-V}
\frac{1}{\|V_{\nu,\e,\beta}\|_{L^2}^2} = \cO\Big(\frac{\beta^2}{\e^2 \nu^2 (1+\beta)^2}\Big).
\end{equation}
Inserting \eqref{exp-LV} and \eqref{estimate-V} into \eqref{eq:trivlambda1}, we obtain
\begin{equation}\label{exp:lambda-real}
  \Re \, \lambda_{\nu,\e,\beta} = \frac{\e^2}{\nu}\left(-\nu^2  + \Re\big( \langle(\partial_y+\e)^{-1}U,
  (\partial_y+\e)^{-1}F_{\nu,\e,\beta}\rangle \big) + \cO \big(\frac{\e}{\nu}(1+\beta)^3\big) \right).
\end{equation}
Moreover, we have
\[
  \langle(\partial_y+\e)^{-1}U,
  (\partial_y+\e)^{-1}F_{\nu,\e,\beta}\rangle
  = -\langle \partial_{yy}^{-1} U + \cO(\e^2) , F_{\nu,\e,\beta} \rangle
  = -\langle \partial_{yy}^{-1} U , F_{\nu,\e,\beta} \rangle + \cO\Big(\frac{\e^3 \nu}{\beta}\Big).
\]
Note that $\cO\big(\frac{\e^3 \nu}{\beta}\big)$ can be absorbed into the error term in \eqref{exp:lambda-real}. Finally, the real part of $\langle \partial_{yy}^{-1} U , F_{\nu,\e,\beta} \rangle$ can written as
\begin{equation}
  \Re \;\langle \partial_{yy}^{-1} U , F_{\nu,\e,\beta} \rangle = \left\langle U, \left(\partial_{y}^4 + \frac{\beta^2}{\e^2 \nu^2}\right)^{-1} U'' \right\rangle,
\end{equation}
which concludes the proof of Theorem \ref{thm:main-1}.

\bigskip

\section*{Acknowledgments}
R. M. Chen was supported in part by the NSF grant DMS-2205910.
T. Dai was supported in part by NSF grants DMS-2205910 and DMS-2219384.
D. Wang was supported in part by NSF grant DMS-2510532.

\end{document}